\documentclass{amsart}

\usepackage{amsthm,amsfonts,amssymb}
\usepackage[usenames]{color}

\setlength{\oddsidemargin}{50pt}
\setlength{\evensidemargin}{50pt}
\setlength{\parindent}{0pt}
\setlength{\parskip}{1ex}
\frenchspacing

\newtheorem{theorem}{Theorem}[section]
\newtheorem{lemma}[theorem]{Lemma}

\newtheorem{proposition}[theorem]{Proposition}
\theoremstyle{remark}
\newtheorem{remark}[theorem]{Remark}
\newtheorem{definition}[theorem]{Definition}

\numberwithin{equation}{section}

\newcommand{\C}{\mathbb{C}}

\newcommand{\N}{\mathbb{N}}
\newcommand{\R}{\mathbb{R}}
\newcommand{\Z}{\mathbb{Z}}

\begin{document}

\title[Fourier multipliers and cylindrical boundary value problems]{Discrete Fourier multipliers and cylindrical boundary value problems}

\author{R.\ Denk, T.\ Nau}
\address{University of Konstanz, Department of Mathematics and Statistics, 78457 Konstanz, Germany}
\email{robert.denk@uni-konstanz.de, tobias.nau@uni-konstanz.de}

\keywords{Discrete Fourier multipliers, maximal regularity, bounded cylindrical domains}
\subjclass[2010]{35J40, 35K46}

\begin{abstract}
We consider operator-valued boundary value problems in $(0,2\pi)^n$ with periodic or, more generally, $\nu$-periodic boundary conditions. Using the concept of discrete vector-valued Fourier multipliers, we give equivalent conditions for the unique solvability of the boundary value problem. As an application, we study vector-valued parabolic initial boundary value problems in cylindrical domains $(0,2\pi)^n\times V$ with $\nu$-periodic boundary conditions in the cylindrical directions. We show that under suitable assumptions on the coefficients, we obtain maximal $L^q$-regularity for such problems.
\end{abstract}

\maketitle

\section{Introduction}

In this paper we  first study boundary value problems with operator-valued coefficients of the form
\begin{align}
  P(D) u + Q(D) A u & = f \quad \text{ in }(0,2\pi)^n,\label{rd01}\\
  D^\beta u\big|_{x_j=2\pi} - e^{2\pi\nu_j} D^\beta u\big|_{x_j=0}& = 0 \quad (j=1,\dots,n,\, |\beta|<m_1).
  \label{rd02}
\end{align}
Here $P(D)$ is a partial differential operator of order $m_1$
acting on $u=u(x)$ with $x\in(0,2\pi)^n$, $Q(D)$ a partial differential operator of order $m_2 \leq m_1$, $A$ is a closed linear operator acting in a Banach space $X$, and $\nu_1,\dots,\nu_n\in\C$ are given numbers. We refer to the boundary conditions as $\nu$-periodic. Note that for $\nu_j=0$ we have periodic boundary conditions in direction $j$, whereas for $\nu_j=\frac i2$ we have antiperiodic boundary conditions in this direction. In general, we have different boundary conditions (i.e., different $\nu_j$) in different directions.

As a motivation for studying problem \eqref{rd01}-\eqref{rd02}, we want to mention two classes of problems: First, the boundary value problem
\eqref{rd01}-\eqref{rd02} includes equations of the form
\begin{equation}\label{rd03}
   u_{t}(t)  + A u (t)  = f (t) \quad (t\in (0,2\pi))
\end{equation}
and
\begin{equation}\label{rd04}
	u_{tt}(t) - a A u_t(t) - \alpha Au(t) = f(t) \quad (t \in (0,2\pi))
\end{equation}
with periodic or $\nu$-periodic boundary conditions. Equations of the form \eqref{rd03} and \eqref{rd04} were considered in
\cite{Arendt-Bu-2002} and \cite{Keyantuo-Lizama-2006}, respectively. These equations fit into our context
 by taking $n=1$, $P(D) = \partial_t$ and $Q(D) = 1$ for \eqref{rd03} and
 $P(D)=\partial_t^2$, $Q(D)=-a\partial_t - \alpha$ for
 \eqref{rd04}.

As a second motivation for studying \eqref{rd01}-\eqref{rd02}, we consider a boundary value problem of cylindrical type where the domain is of the form $\Omega = (0,2\pi)^n\times V$ with $V\subset\R^{n_V}$ being a  sufficiently smooth domain with compact boundary. The operator is assumed to split in the sense that
\begin{equation}
  \label{rd05}
  \mathcal A(x,D) = P(x^1,D_1) + Q(x^1,D_1) A_V(x^2,D_2)
\end{equation}
where the differential operators $P(x^1,D_1)$ and $Q(x^1,D_1)$ act on $x^1\in(0,2\pi)^n$ only and the differential operator $A_V(x^2,D_2)$ acts on $x^2\in V$ only. The boundary conditions are assumed to be $\nu$-periodic in $x^1$-direction, whereas in $V$ the operator $A_V(x^2,D_2)$ of order $2m_V$ may be supplemented with general boundary conditions $B_1(x^2,D_2),\dots$, $B_{m_V}(x^2,D_2)$. The simplest example of such an operator is the Laplacian in a finite cylinder $(0,2\pi)^n\times V$ with $\nu$-periodic boundary conditions in the cylindrical directions and Dirichlet boundary conditions on $(0,2\pi)^n\times\partial V$.

Our first main result (Theorem~\ref{maintheorem_ell}) gives, under appropriate assumptions on $P$, $Q$, and $A$, equivalent conditions for the unique solvability of \eqref{rd01}-\eqref{rd02} in $L^p$-Sobolev spaces. This results generalizes results from \cite{Arendt-Bu-2002} and \cite{Keyantuo-Lizama-2006} on equations \eqref{rd03} and \eqref{rd04}, respectively.

In particular in connection with operators of the form \eqref{rd05} in cylindrical domains, one is also interested in parabolic theory. Therefore, in Section~5 we study problems of the form
\begin{equation}
  \label{rd06}
  \begin{aligned}
    u_t + \mathcal A(x,D) u & = f \quad (t\in [0,T],\, x\in (0,2\pi)^n\times V),\\
    B_j(x,D) u & = 0 \quad(t\in [0,T],\, x\in (0,2\pi)^n\times \partial V,\, j=1,\dots,m_V),\\
(D^\beta u)|_{x_j = 2\pi} -  e^{2\pi \nu_j} (D^\beta u)|_{x_j = 0} & =0
	\quad (j = 1,\ldots,n;\ |\beta| < m_1),\\
u(0,x) & = u_0(x) \quad (x\in (0,2\pi)^n\times V).
     \end{aligned}
\end{equation}
Here $\mathcal A(x,D)$ is of the form \eqref{rd05}. If $(A_V,B_1,\dots,B_{m_V})$ is a parabolic boundary value problem in the sense of parameter-ellipticity (see \cite[Section 8]{Denk-2003}), we obtain, under suitable assumptions on $P$ and $Q$, maximal $L^q$-regularity for \eqref{rd06} (see Theorems~\ref{mainresult_1} and \ref{mainresult_2} below). The proof of maximal regularity is based on the $\mathcal R$-boundedness of the resolvent related to \eqref{rd06}.

Apart from its own interest, the consideration of $\nu$-periodic boundary conditions also allows us to address boundary conditions of mixed type. As the simplest example, when $a = 0$ we can analyze equation~\eqref{rd04} with Dirichlet-Neumann type boundary conditions
\[ u(0) = 0,\; u_t(\pi) =0.\]
The connection to periodic and antiperiodic boundary conditions is given by suitable extensions of the solution. This was also considered in \cite{Arendt-Bu-2002} where -- starting from periodic boundary conditions -- the pure Dirichlet and the pure Neumann case could be treated.

The main tool to address problems \eqref{rd01}-\eqref{rd02} and \eqref{rd06} is the theory of discrete vector-valued Fourier multipliers. Taking the Fourier series in the cylindrical directions, we are faced with the question under which conditions an operator-valued Fourier series defines a bounded operator in $L^p$. This question was answered by Arendt and Bu in \cite{Arendt-Bu-2002} for the one-dimensional case $n=1$, where  a  discrete operator-valued Fourier multiplier result for UMD spaces
and applications to periodic Cauchy problems of first and second order
in Lebesgue- and H\"older-spaces can be found. For general $n$, the main result on vector-valued Fourier multipliers is contained in  \cite{Bu-Kim-2004}.
A shorter proof of this result by means of induction
based on the result for $n = 1$ in \cite{Arendt-Bu-2002} is given in \cite{Bu-2006}.
As pointed out by the authors in \cite{Arendt-Bu-2002} and \cite{Bu-Kim-2004}, the results
can as well be deduced from \cite[Theorems 3.7, 3.8]{Strkalj-Weis-2007}.

A generalization of the results in \cite{Arendt-Bu-2002}
to periodic first order integro-differential equations
in Lebesgue-, Besov- and H\"older-spaces
is given in \cite{Keyantuo-Lizama-2004}.
Here the concept of 1-regularity in the context of sequences
is introduced (see Remark~\ref{rem1reg} below).

In \cite{Keyantuo-Lizama-2006} one finds a comprehensive treatment of periodic
second order differential equations of type \eqref{rd04}
 in Lebesgue- and H\"older-spaces.
In particular, the special case of a Cauchy problem of
second order, i.e. $\alpha = 0, a = 1$,
where $A$ is the generator of a strongly continuous cosine function
is investigated.
In \cite{Keyantuo-Lizama-Poblete-2009} more general equations are treated
in the mentioned spaces as well as in Triebel-Lizorkin-spaces. Moreover,
applications to nonlinear equations are presented.

Maximal regularity of  second order initial value problems  of the type
\begin{align*}
	u_{tt}(t) + B u_t(t) + Au(t) &= f(t) \quad (t \in [0,T)),\\
\ u(0) = u_t(0) & = 0
\end{align*}
is treated in \cite{Chill-Sri-2005} and  \cite{Chill-Sri-2008}. In particular,
 $p$-independence of maximal regularity for this type of second order problems
 is shown. The same equation involving
dynamic boundary conditions is studied in \cite{Xiao-Liang-2004}.
The non-autonomous second order problem,
involving $t$-dependent operators $B(t)$ and $A(t)$, is treated in
\cite{Batty-Chill-Sri-2008}. We also refer to \cite{Xiao-Liang-1998} for the treatment of higher order Cauchy problems.

In \cite{Arendt-Rabier-2009} various properties as e.g. Fredholmness
of the operator $\partial_t - A(\cdot)$ associated to the
non-autonomous periodic first order Cauchy-problem in $L^p$-context
are investigated. Results on this operator based on Floquet theory
are obtained in the PhD-thesis \cite{Gauss-2001}. We remark that in Floquet theory
$\nu$-periodic (instead of periodic) boundary conditions appear in a natural way.

For the treatment of boundary value problems in $(0,1)$ with operator-valued coefficients subject to
numerous types of homogeneous and inhomogeneous boundary conditions, we refer to
\cite{Favin-Labbas-Maingot-Tanabe-Yagi-2008}, \cite{Favini-Shakmurov-Yakubov-2009}, \cite{Favini-Yakubov-2010} and the references therein.
Their approaches mainly rely on semigroup theory and do not allow for an easy generalization to $(0,1)^n$.
In \cite{Favini-Shakmurov-Yakubov-2009} however, applications to boundary value problems in the cylindrical space domain
$(0,1) \times V$ can be found.

The usage of operator-valued multipliers to treat cylindrical in space boundary value problems
was first carried out in \cite{Guidotti-2004} and \cite{Guidotti-2005} in a Besov-space setting.
In these papers the author constructs semiclassical fundamental
solutions for a class of elliptic operators on infinite cylindrical domains $\mathbb R^n \times V$.
This proves to be a strong tool for the treatment of related
elliptic and parabolic (\cite{Guidotti-2004} and \cite{Guidotti-2005}), as well as
of hyperbolic (\cite{Guidotti-2005}) problems. Operators in cylindrical domains with a similar splitting property as in the present paper were, in the case of an infinite cylinder, also considered in \cite{Nau-Saal-2011}.

\section{Discrete Fourier multipliers and $\mathcal R$-boundedness}
In the following, let $X$ and $Y$ be Banach spaces, $1<p<\infty$, $n\in\N$, and $\mathcal Q_n
:= (0,2\pi)^n$. By $\mathcal L(X,Y)$ we denote the space of all bounded linear operators
from $X$ to $Y$, and we set $\mathcal L(X) := \mathcal L(X,X)$.  By $L^p(\mathcal Q_n,X)$ we denote the standard Bochner space of $X$-valued $L^p$-functions defined on $\mathcal Q_n$. For $f \in L^p(\mathcal Q_n,X)$ and $\mathbf k \in \mathbb Z^n$ the $\mathbf k$-th Fourier coefficient of $f$ is given by
\begin{equation}\label{rd07}
	\hat f (\mathbf k) := \frac{1}{(2\pi)^n}
	\int_{\mathcal Q_n}e^{-i \mathbf{k}\cdot x}f(x) dx\,.
\end{equation}
By  Fejer's Theorem 
we see that  $f(x) = 0$ almost everywhere
if $\hat f (\mathbf k) = 0$ for all $\mathbf k \in \mathbb Z^n$
as well as $f(x) = \hat f (\mathbf 0)$ almost everywhere
if $\hat f (\mathbf k) = 0$ for all $\mathbf k \in \mathbb Z^n \setminus \{\mathbf 0\}$.
Moreover for $f, g \in L^p(\mathcal Q_n,X)$ and a closed operator $A$ in $X$
it holds that
$f(x) \in D(A)$ and $Af(x) = g(x)$ almost everywhere
if and only if
$\hat f(\mathbf k) \in D(A)$ and
$A \hat f(\mathbf k) = \hat g(\mathbf k)$ for all $\mathbf k \in \mathbb Z^n$.
We will frequently make use of these observations without further comments.
\begin{definition}
A function $M\colon \mathbb Z^n \rightarrow \mathcal L(X,Y)$
is called a (discrete) $L^p$-multiplier if
for each $f \in L^p(\mathcal Q_n,X)$ there exists a $g \in L^p(\mathcal Q_n,Y)$ such that
	\[
		\hat g (\mathbf k) = M(\mathbf k) \hat f (\mathbf k)
		\quad ( \mathbf k \in \mathbb Z^n).
	\]
In this case there exists a unique operator
$T_M \in \mathcal L (L^p(\mathcal Q_n,X),L^p(\mathcal Q_n,Y))$ associated to $M$ such that
\begin{equation}\label{Fejer1}
	 (T_M f)\hat ~ (\mathbf k) = M(\mathbf k) \hat f (\mathbf k)
	\quad ( \mathbf k \in \mathbb Z^n).
\end{equation}
\end{definition}
The property of being a  Fourier multiplier is closely related to the concept of $\mathcal R$-boundedness. Here we give only the definition and some properties which will be used later on; as   references for $\mathcal R$-boundedness we mention \cite{Kunstmann-Weis-2004} and \cite{Denk-2003}.
\begin{definition}
A family $\mathcal{T} \subset \mathcal{L}(X,Y)$ is called
 $\mathcal{R}$-bounded if there exist a $C > 0$ and a $p \in [1,
\infty)$ such that for all $N \in \mathbb{N}$, $T_j \in \mathcal{T}$,
$x_j \in X$ and all independent symmetric $\{-1,1\}$-valued random variables
$\varepsilon_j$ on a probability space $(\Omega,\mathcal{A},P)$ for
$j =1,...,N$, we have that
\begin{equation}\label{R_bound}
\Big\| \sum\limits_{j=1}^{N} \varepsilon_j T_jx_j\Big\|_{L^p(\Omega,Y)} \leq C_p
\Big\|
\sum\limits_{j=1}^{N} \varepsilon_j x_j\Big\|_{L^p(\Omega,X)}.
\end{equation}
The smallest $C_p>0$ such that \eqref{R_bound} is satisfied is
called $\mathcal R_p$-bound of $\mathcal T$ and denoted by $\mathcal
R_p(\mathcal T)$.
\end{definition}
By Kahane's inequality, \eqref{R_bound} holds for all $p\in[1,\infty)$ if it holds for one $p\in[1,\infty)$. Therefore, we will drop the $p$-dependence of $\mathcal R_p(\mathcal T)$ in the notation and write $\mathcal R(\mathcal T)$.
\begin{lemma}\label{HintSummeKahane}
a) Let  $Z$ be a third Banach space and let $\mathcal{T},
\mathcal{S} \subset \mathcal{L}(X,Y)$ as well as $\mathcal{U} \subset
\mathcal{L}(Y,Z)$ be $\mathcal{R}$-bounded.
Then
$\mathcal{T} + \mathcal{S}$,
$\mathcal{T} \cup \mathcal{S}$ and
$\mathcal{U} \mathcal{T}$
are $\mathcal{R}$-bounded as well
and we have
$$\mathcal{R}(\mathcal{T} + \mathcal{S}),\
\mathcal{R}(\mathcal{T} \cup \mathcal{S}) \leq
\mathcal{R}(\mathcal{S})+\mathcal{R}(\mathcal{T}), \quad
\mathcal{R}(\mathcal{U}  \mathcal{T}) \leq
\mathcal{R}(\mathcal{U})\mathcal{R}(\mathcal{T}).$$
Furthermore, if $\overline{\mathcal T}$ denotes the closure of
$\mathcal T$ with respect to the strong operator topology, then we have
$\mathcal R(\overline{\mathcal T})=\mathcal R(\mathcal T)$.

b) {\em Contraction principle of Kahane:}\
Let $p \in[1,\infty)$. Then for all $N\in\mathbb{N}, x_j\in X,
\varepsilon_j$ as above, and for all $a_j, b_j\in\mathbb{C}$ with
$|a_j|\leq|b_j|$ for $j=1,\dots,N$ we have
\begin{equation}
\Big\|\sum_{j=1}^N a_j\varepsilon_jx_j\Big\|_{L^p(\Omega,X)}\leq
2\Big\|\sum_{j=1}^Nb_j\varepsilon_jx_j\Big\|_{L^p(\Omega,X)}.
\end{equation}
\end{lemma}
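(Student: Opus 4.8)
The plan is to reduce everything to the defining inequality \eqref{R_bound}, applied with a cleverly chosen collection of operators, vectors, and sign variables. For part~a), the statement about $\mathcal T+\mathcal S$ is immediate from the triangle inequality in $L^p(\Omega,Y)$: writing $(T_j+S_j)x_j = T_jx_j + S_jx_j$ and splitting the sum, I would bound $\|\sum_j\varepsilon_j(T_j+S_j)x_j\|_{L^p}$ by $\|\sum_j\varepsilon_jT_jx_j\|_{L^p}+\|\sum_j\varepsilon_jS_jx_j\|_{L^p}$ and then apply \eqref{R_bound} for $\mathcal T$ and for $\mathcal S$ to the \emph{same} data $(x_j,\varepsilon_j)$. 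For $\mathcal T\cup\mathcal S$, given operators $R_j\in\mathcal T\cup\mathcal S$ I would partition the index set $\{1,\dots,N\}$ into $I_{\mathcal T}=\{j: R_j\in\mathcal T\}$ and $I_{\mathcal S}$ (breaking ties arbitrarily), split the random sum accordingly, apply the triangle inequality, and then use \eqref{R_bound} on each piece; here one needs the elementary fact that $\|\sum_{j\in I}\varepsilon_jx_j\|_{L^p(\Omega,X)}\le\|\sum_{j=1}^N\varepsilon_jx_j\|_{L^p(\Omega,X)}$ for any subset $I$, which itself follows from the contraction principle (taking $a_j=\mathbf 1_{I}(j)$, $b_j=1$) or directly by a conditioning/Jensen argument. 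For the composition $\mathcal U\mathcal T$, given $U_j\in\mathcal U$ and $T_j\in\mathcal T$ I would set $y_j:=T_jx_j\in Y$ and write
\[
\Big\|\sum_{j=1}^N\varepsilon_jU_jT_jx_j\Big\|_{L^p(\Omega,Z)}
=\Big\|\sum_{j=1}^N\varepsilon_jU_jy_j\Big\|_{L^p(\Omega,Z)}
\le \mathcal R(\mathcal U)\Big\|\sum_{j=1}^N\varepsilon_jy_j\Big\|_{L^p(\Omega,Y)}
\le \mathcal R(\mathcal U)\mathcal R(\mathcal T)\Big\|\sum_{j=1}^N\varepsilon_jx_j\Big\|_{L^p(\Omega,X)},
\]
applying \eqref{R_bound} first for $\mathcal U$ (with vectors $y_j$) and then for $\mathcal T$ (with vectors $x_j$). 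Since all three inequalities hold with the stated constants for every $N$ and every choice of data, the claimed $\mathcal R$-bounds follow.

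For the strong-closure statement, I would fix $N$, choose $\overline T_j\in\overline{\mathcal T}$ and $x_j\in X$, and pick $T_j^{(\ell)}\in\mathcal T$ with $T_j^{(\ell)}x_j\to\overline T_jx_j$ in $Y$ as $\ell\to\infty$ (possible by definition of the strong operator topology, applied to the finitely many vectors $x_1,\dots,x_N$). For each $\ell$, \eqref{R_bound} gives $\|\sum_j\varepsilon_jT_j^{(\ell)}x_j\|_{L^p(\Omega,Y)}\le\mathcal R(\mathcal T)\|\sum_j\varepsilon_jx_j\|_{L^p(\Omega,X)}$; letting $\ell\to\infty$ and using that $\sum_j\varepsilon_jT_j^{(\ell)}x_j\to\sum_j\varepsilon_j\overline T_jx_j$ in $L^p(\Omega,Y)$ (a finite sum of convergent terms) yields the same bound for $\overline T_j$, hence $\mathcal R(\overline{\mathcal T})\le\mathcal R(\mathcal T)$; the reverse inequality is trivial since $\mathcal T\subset\overline{\mathcal T}$.

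Part~b), the contraction principle, is the one step I would not reprove from scratch: it is a classical fact (for real scalars it is Kahane's contraction principle with constant $1$; the factor $2$ absorbs the passage to complex scalars by treating real and imaginary parts separately). I would simply cite it from \cite{Kunstmann-Weis-2004} or \cite{Denk-2003}. If a self-contained argument were wanted, the real-scalar case follows by conditioning on the $\varepsilon_j$ and using that $t\mapsto\|\sum_j t_jx_j\|$ (with $\varepsilon_j$ frozen into the $x_j$) is convex in each $t_j$, so its maximum over the box $\prod_j[-|b_j|,|b_j|]$ is attained at a vertex; writing $(a_j)$ as a convex combination of such vertices and using the triangle inequality in $L^p(\Omega,X)$ gives the bound with constant $1$, and then $a_j=\mathrm{Re}\,a_j+i\,\mathrm{Im}\,a_j$ gives the factor $2$ in the complex case. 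The only mild subtlety — and the part most worth stating carefully — is the bookkeeping in the $\mathcal T\cup\mathcal S$ case, namely that restricting a Rademacher sum to a subset of indices does not increase its $L^p$-norm; everything else is a direct, essentially one-line application of \eqref{R_bound}.
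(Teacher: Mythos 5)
Your proposal is correct, and it is worth noting that the paper itself gives no proof of this lemma at all: it is stated as a collection of standard facts, with \cite{Kunstmann-Weis-2004} and \cite{Denk-2003} cited earlier as the references for $\mathcal R$-boundedness. Your arguments are exactly the standard ones found there: triangle inequality plus \eqref{R_bound} for $\mathcal T+\mathcal S$; partition of the index set together with the fact that dropping indices from a Rademacher sum does not increase its $L^p$-norm for $\mathcal T\cup\mathcal S$; substitution $y_j:=T_jx_j$ for $\mathcal U\mathcal T$; and approximation on the finitely many vectors $x_1,\dots,x_N$ for the strong closure (which indeed needs no metrizability of the strong topology, only the neighborhood description). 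The only point where your courtesy sketch of part~b) is slightly loose is the complex case: for complex $b_j$ one should first write $a_j=c_jb_j$ with $|c_j|\le 1$ and absorb $b_j$ into $y_j:=b_jx_j$, and only then run the convexity/vertex argument for $\operatorname{Re}c_j$ and $\operatorname{Im}c_j$ separately, since $\bigl\|\sum_j|b_j|\varepsilon_jx_j\bigr\|$ need not coincide with $\bigl\|\sum_jb_j\varepsilon_jx_j\bigr\|$ when the $b_j$ carry phases; as you intend to cite the contraction principle anyway, this does not affect the validity of your proof.
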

For $M\colon \mathbb Z^n \rightarrow \mathcal L(X,Y)$ and $1\le j\le n$
we inductively define the differences (discrete derivatives)
\[
  \Delta_j^\ell M(\mathbf k) := \Delta_j^{\ell-1} M(\mathbf k) - \Delta_j^{\ell-1} M(\mathbf k - \mathbf e_j) \quad (\ell\in\N,\,\mathbf k\in\Z^n),
\]
where $\mathbf e_j$ denotes the $j$-th unit vector in $\R^n$ and where we have set
 $\Delta^0_j M(\mathbf k) := M(\mathbf k)\;(\mathbf k\in\Z^n)$.
As $\Delta_i^{\gamma_i}$ and $\Delta_j^{\gamma_j}$ commute for $1 \leq i,j\leq n$,
for a multi-index $\gamma\in\N_0^n$ the expression
\[
	\Delta^\gamma M(\mathbf k)
	:= \big(\Delta_1^{\gamma_1} \cdots \Delta_n^{\gamma_n} M \big)(\mathbf k)
\quad(\mathbf k\in\Z^n)
\]
is well-defined. Given $\alpha, \beta, \gamma \in \mathbb N_0^n$, we will
 write $\alpha \leq \gamma \leq \beta$ if
$\alpha_j \leq \gamma_j \leq \beta_j$ for all $1\leq j \leq n$. We also set
 $|\alpha|:=\alpha_1+\dots + \alpha_n$, $\mathbf 0 := (0,\dots, 0)$, and $\mathbf 1:= (1,\dots, 1)$.

We recall that a Banach space $X$ is called a UMD space or a Banach space of class $\mathcal{HT}$ if there exists a $q\in(1,\infty)$ (equivalently: if for all $q\in(1,\infty)$) the Hilbert transform defines a bounded operator in $L^q(\R,X)$. A Banach space $X$ is said to have property $(\alpha)$ if there exists a $C>0$ such that for all $N\in\N$, $\alpha_{ij}\in\C$ with $|\alpha_{ij}|\le 1$, all $x_{ij}\in X$, and all independent symmetric $\{+1,-1\}$-valued random variables $\varepsilon_i^{(1)}$ on a probability space $(\Omega_1,\mathcal A_1,P_1)$ and $\varepsilon_j^{(2)}$ on a probability space $(\Omega_2,\mathcal A_2,P_2)$ for $i,j=1,\dots,N$ we have
\[ \Big\|\sum_{i,j=1}^N \alpha_{ij}\varepsilon_i^{(1)} \varepsilon_j^{(2)} x_{ij}\Big\|_{L^2(\Omega_1\times\Omega_2,X)} \le C \Big\|\sum_{i,j=1}^N \varepsilon_i^{(1)} \varepsilon_j^{(2)} x_{ij}\Big\|_{L^2(\Omega_1\times\Omega_2,X)}.\]
The following result from Bu and Kim characterizes discrete Fourier multipliers by $\mathcal R$-boundedness.
\begin{theorem}[\cite{Bu-Kim-2004}]\label{Bu}
\mbox{}
a) Let $X, Y$ be UMD spaces
and let $\mathcal T \subset \mathcal L(X,Y)$ be $\mathcal R$-bounded.
If $M\colon \mathbb Z^n \rightarrow \mathcal L(X,Y)$ satisfies
\begin{align}\label{mult_cond_1}
	\big\{|\mathbf k|^{|\gamma|}
	\Delta^\gamma M(\mathbf k):
	\mathbf k \in \mathbb Z^n,\ \mathbf 0 \leq \gamma \leq \mathbf 1 \big\} \subset \mathcal T,
\end{align}	
then $M$ defines a Fourier multiplier.

b) If $X, Y$ additionally enjoy property $(\alpha)$, then
\begin{align}\label{mult_cond_2}
	\big\{\mathbf k^{\gamma}
	\Delta^\gamma M(\mathbf k):\
	\mathbf k \in \mathbb Z^n,\ \mathbf 0 \leq \gamma \leq \mathbf 1 \big\} \subset \mathcal T
\end{align}	
is sufficient. In this case the set
\[\{T_{M}: \text{ M satisfies condition \eqref{mult_cond_2}} \} \subset \mathcal L
\big( L^p(\mathcal Q_n,X), L^p(\mathcal Q_n,Y)\big)\]
is $\mathcal R$-bounded again.
\end{theorem}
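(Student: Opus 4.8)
The plan is to treat this (known) theorem of Bu and Kim by reducing it to the one–dimensional case $n=1$ and then proceeding by induction on $n$, using a \emph{continuous} operator-valued Fourier multiplier theorem on $\R^n$ as the engine. The elementary stability properties of $\mathcal R$-boundedness collected in Lemma~\ref{HintSummeKahane} (closedness under finite unions, sums and products, together with the contraction principle) will be invoked repeatedly to pass from hypotheses on the discrete symbol $M$ to hypotheses on auxiliary symbols.

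\emph{Base case $n=1$.} Here \eqref{mult_cond_1} and \eqref{mult_cond_2} coincide ($\gamma$ ranges over $\{0,1\}$ and the families $\{k\Delta_1 M(k)\}$, $\{|k|\Delta_1 M(k)\}$ differ only by signs, so have the same $\mathcal R$-bound), and property $(\alpha)$ is not needed. Given $M\colon\Z\to\mathcal L(X,Y)$ with $\{M(k)\}\cup\{k\Delta_1 M(k)\}\subset\mathcal T$, I would extend $M$ to $\tilde m\colon\R\to\mathcal L(X,Y)$ which is affine on each $[k,k+1]$ with $\tilde m|_\Z=M$, and then mollify slightly so that $\tilde m\in C^\infty(\R\setminus\{0\})$ while preserving the $\mathcal R$-estimates below up to a constant. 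On $(k,k+1)$ one has $\tilde m'(\xi)=\Delta_1 M(k+1)$ and $|\xi|\sim |k|+1$ for $|k|\ge1$, so by the contraction principle and the union property of Lemma~\ref{HintSummeKahane} (the finitely many intervals near the origin forming a finite, hence $\mathcal R$-bounded, set) the families $\{\tilde m(\xi):\xi\in\R\}$ and $\{\xi\tilde m'(\xi):\xi\in\R\}$ are $\mathcal R$-bounded with bound controlled by $\mathcal R(\mathcal T)$. The operator-valued Mikhlin theorem on $\R$ (valid for UMD $X,Y$, with the $\mathcal R$-bounded refinement of \v{S}trkalj--Weis) then shows $\tilde m$ is an $L^p(\R,X)$--$L^p(\R,Y)$ multiplier whose associated operators form an $\mathcal R$-bounded family with bound depending only on $\mathcal R(\mathcal T)$. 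Finally, an operator-valued de~Leeuw-type transference (restriction) argument shows that the restriction of $\tilde m$ to $\Z$, namely $M$, is a discrete $L^p(\mathcal Q_1)$-multiplier with $\|T_M\|$ dominated by the continuous multiplier norm, and that $\mathcal R$-boundedness of the set of such $T_M$ is inherited.

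\emph{Induction step.} Assume the statement, including the final $\mathcal R$-boundedness assertion, in dimension $n-1$. Identify $L^p(\mathcal Q_n,X)\cong L^p(\mathcal Q_{n-1},E)$ with $E:=L^p(\mathcal Q_1,X)$, and likewise $F:=L^p(\mathcal Q_1,Y)$; then $E,F$ are again UMD and inherit property $(\alpha)$ from $X,Y$. For fixed $\mathbf k'=(k_1,\dots,k_{n-1})$ the section $k_n\mapsto M(\mathbf k',k_n)$ satisfies the one-dimensional hypothesis, hence by the base case defines $N(\mathbf k')\in\mathcal L(E,F)$. Since a difference $\Delta_j$ with $j<n$ commutes with the linear assignment "symbol in the $n$-th variable $\mapsto$ its multiplier operator", the final clause of the base case converts \eqref{mult_cond_2} (resp.\ \eqref{mult_cond_1}) for $M$ into the corresponding condition for $N\colon\Z^{n-1}\to\mathcal L(E,F)$: the family $\{\mathbf k'^{\gamma'}\Delta^{\gamma'}N(\mathbf k'):\gamma'\in\{0,1\}^{n-1}\}$ (resp.\ with weight $|\mathbf k'|^{|\gamma'|}$, using the contraction principle to absorb $|k_n|^{\gamma_n}|\mathbf k'|^{|\gamma'|}\le|\mathbf k|^{|\gamma|}$) is $\mathcal R$-bounded. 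Applying the induction hypothesis to $N$ yields that $N$, hence $M$, is a Fourier multiplier and that $\{T_M\}$ is $\mathcal R$-bounded, completing the induction. For part a), where property $(\alpha)$ is unavailable, one uses at each step the UMD-only continuous multiplier theorem with the stronger $|\xi|^{|\gamma|}$-scaling, which is exactly what compensates for the missing property $(\alpha)$.

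\emph{Main obstacle.} The crux is the induction step, and in particular the assertion that forming finite differences $\Delta_j$ ($j<n$) of the $\mathcal L(X,Y)$-valued sequence $\big(k_n\mapsto M(\cdot,k_n)\big)$ interacts correctly with passing to the operator $N(\mathbf k')\in\mathcal L(E,F)$, so that an $\mathcal R$-bound for $\{\mathbf k^\gamma\Delta^\gamma M(\mathbf k)\}$ on $\Z^n$ becomes an $\mathcal R$-bound for the $\mathcal L(E,F)$-valued differences of $N$. This is precisely the place where the second conclusion of the base case --- $\mathcal R$-boundedness of the set $\{T_M\}$, which in the setting of part b) itself relies on property $(\alpha)$ --- is indispensable: a mere boundedness statement for a single $T_M$ would not survive the iteration. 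A secondary technical nuisance is the only-$C^0$ regularity of the piecewise-affine extension in the base case, which must be smoothed without spoiling the $\mathcal R$-estimates; alternatively one can bypass $\R$ altogether and run a direct discrete Littlewood--Paley / summation-by-parts argument on the torus, as in Arendt--Bu.
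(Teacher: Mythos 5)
A point of reference first: the paper does not prove Theorem~\ref{Bu} at all --- it is quoted from \cite{Bu-Kim-2004}, with the remark that an inductive proof based on the $n=1$ case of \cite{Arendt-Bu-2002} is given in \cite{Bu-2006}, and that the result can also be deduced from \cite{Strkalj-Weis-2007}. So your proposal has to be judged on its own merits. Your base case is essentially the Arendt--Bu argument (piecewise affine extension of the sequence, operator-valued Mikhlin/Marcinkiewicz theorem on $\R$, operator-valued de~Leeuw restriction), and your induction step is essentially the route of \cite{Bu-2006}. Modulo the usual care with the extension (use the bounded-variation form of the continuous theorem, or mollify only away from the integers so that $\tilde m|_{\Z}=M$ is preserved and the integers remain points of continuity for the restriction argument), this is a correct reconstruction of part b), including the observation that $E=L^p(\mathcal Q_1,X)$ inherits UMD and property $(\alpha)$.

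The genuine gap is in part a). Your induction step hinges on upgrading an $\mathcal R$-bounded family of one-dimensional symbols (the sections $k_n\mapsto |\mathbf k'|^{|\gamma'|}\Delta^{\gamma'}M(\mathbf k',k_n)$, which indeed satisfy a uniform one-dimensional Marcinkiewicz condition by the contraction principle) to an $\mathcal R$-bounded family of multiplier operators $\{|\mathbf k'|^{|\gamma'|}\Delta^{\gamma'}N(\mathbf k')\}\subset\mathcal L(E,F)$; as you note yourself, uniform boundedness of each single $T$ does not survive the iteration. But the implication ``$\mathcal R$-bounded symbol class $\Rightarrow$ $\mathcal R$-bounded set of multiplier operators'' is precisely where property $(\alpha)$ enters (this is the content of the $\mathcal R$-bounded refinements in \cite{Strkalj-Weis-2007} and related work, and it fails for general UMD spaces), and the stronger $|\xi|^{|\gamma|}$-scaling of condition \eqref{mult_cond_1} does not compensate: the scaling only changes which weighted differences must be controlled, not whether the resulting operator family is $\mathcal R$-bounded. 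Hence your scheme, as written, proves only part b). Part a) requires a non-inductive argument --- in effect an $n$-dimensional version of your base case: a piecewise multi-affine extension of $M$ to $\R^n$, verification of the $\mathcal R$-bounded Mikhlin condition on $\{|\xi|^{|\gamma|}D^\gamma\tilde m(\xi):\ \mathbf 0\le\gamma\le\mathbf 1\}$, the continuous operator-valued multiplier theorem on $\R^n$ for UMD spaces (no property $(\alpha)$ needed for this form of the condition), and an $n$-dimensional operator-valued de~Leeuw restriction --- which is how \cite{Bu-Kim-2004} and \cite{Strkalj-Weis-2007} proceed.
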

\begin{remark}
In \cite{Bu-Kim-2004}, Theorem \ref{Bu} is stated with
discrete derivatives $\tilde \Delta$ defined in
such a way that $\Delta^\gamma M(\mathbf k + \gamma) = \tilde \Delta^\gamma M(\mathbf k)$.
However, as for fixed $\gamma\in\{0,1\}^n$ there exist $c,C > 0$ such that
$c|\mathbf k - \gamma| \leq |\mathbf k | \leq C|\mathbf k - \gamma|$
for $\mathbf k  \in\Z^n\setminus\{0,1\}^n$, Lemma \ref{HintSummeKahane} shows
our formulation to be equivalent to the one in \cite{Bu-Kim-2004}.
Throughout this article, we will make use of this estimate frequently
without any further comment.
\end{remark}

The following lemma
states some properties for discrete derivatives, where
$(S_\mathbf k)_{\mathbf k\in\Z^n}$ and $(T_\mathbf k)_{\mathbf k\in\Z^n}$ denote arbitrary commuting sequences in $\mathcal L (X)$.
For $\alpha \in \mathbb N_0^n \setminus \{\mathbf 0\}$, let
\[
	\mathcal{Z}_{\alpha} := \Big\{\mathcal{W} = (\omega^1,\dots,\omega^{r});\
	1 \leq r \leq |\alpha|,\ \mathbf 0 \le \omega^j \leq \alpha,\, \omega^j\not=\mathbf 0,\, \sum_{j = 1}^{r}
	\omega^j = \alpha \Big\}
\]
denote the set of all additive decompositions of $\alpha$ into
$r = r_\mathcal{W}$  multi-indices and set
$\mathcal{Z}_{\mathbf 0} := \{\emptyset\}$ and $r_{\emptyset}:=0$.
For $\mathcal W \in \mathcal Z_\alpha$ we set $\omega^*_j := \sum_{l = j + 1}^r \omega_l$.
In the following,  $c_{\alpha,\beta}$ and $c_\mathcal W$ will denote integer constants  depending on $\alpha, \beta$ and $\mathcal W$, respectively.
\begin{lemma}\label{tec_lemma}
a) \emph{Leibniz rule:}\ For $\alpha\in\N_0^n$ we have
\begin{align*}
	\Delta^\alpha(ST)_\mathbf k
	 = \sum\limits_{\mathbf 0\le\beta \leq \alpha} c_{\alpha,\beta} (\Delta^{\alpha - \beta}S)_{\mathbf k - \beta} (\Delta^\beta T)_\mathbf k\quad(\mathbf k\in\Z^n).
\end{align*}
b) Let $(S^{-1})_\mathbf k := (S_\mathbf k)^{-1}$ exist for all $\mathbf k\in\Z^n$. Then, for $\alpha\in\N_0^n$ we have
\begin{align*}
	\Delta^\alpha (S^{-1})_\mathbf k & = \sum\limits_{\mathcal W \in \mathcal Z_\alpha} c_\mathcal W (S^{-1})_{\mathbf k - \alpha}
			\prod\limits_{j = 1}^{r_\mathcal W} (\Delta^{\omega_j}S)_{\mathbf k - \omega^*_j}  (S^{-1})_{\mathbf k - \omega^*_j}\quad(\mathbf k\in\Z^n).
\end{align*}
\end{lemma}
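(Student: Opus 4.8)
The plan is to prove both parts by induction, first reducing to the one-dimensional case and then iterating over the coordinate directions. For part~a), observe that each $\Delta_j$ is a first-order difference operator, so it suffices to establish the one-variable Leibniz rule $\Delta(ST)_k = (\Delta S)_{k-1}(\Delta T)_k + (\Delta S)_{k-1}T_k + S_{k-1}(\Delta T)_k$ — or rather, after regrouping, $\Delta(ST)_k = (\Delta S)_k T_k + S_{k-1}(\Delta T)_k$, which follows immediately from writing $S_kT_k - S_{k-1}T_{k-1} = (S_k-S_{k-1})T_k + S_{k-1}(T_k-T_{k-1})$. One then iterates: applying $\Delta_j^{\alpha_j}$ repeatedly in one variable gives a one-dimensional Leibniz formula with binomial-type coefficients and shifted arguments on the $S$-factor; since the $\Delta_j$ for different $j$ commute and the sequences $S,T$ commute, one multiplies these one-dimensional expansions together over $j=1,\dots,n$, collecting the shift $\mathbf k - \beta$ on the $\Delta^{\alpha-\beta}S$ factor. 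The integer constants $c_{\alpha,\beta}$ are then products of the one-dimensional coefficients; their precise values are irrelevant for our purposes, so I would not compute them explicitly.

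For part~b), the starting point is the one-variable identity $(\Delta S^{-1})_k = S_k^{-1} - S_{k-1}^{-1} = -S_{k-1}^{-1}(S_k - S_{k-1})S_k^{-1} = -(S^{-1})_{k-1}(\Delta S)_k (S^{-1})_k$, which exhibits the characteristic ``sandwich'' structure. The general formula is then obtained by induction on $|\alpha|$: assuming the claimed expansion for all multi-indices of length less than $|\alpha|$, write $\Delta^\alpha = \Delta_j \Delta^{\alpha - \mathbf e_j}$ for some $j$ with $\alpha_j \geq 1$, apply the inductive hypothesis to the inner $\Delta^{\alpha-\mathbf e_j}(S^{-1})$, and then apply $\Delta_j$ to the resulting product using the Leibniz rule from part~a). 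Each time $\Delta_j$ hits a factor $(S^{-1})_{\mathbf k - \omega_j^*}$, one uses the one-variable sandwich identity to convert it into $-(S^{-1})(\Delta S)(S^{-1})$, which lengthens the decomposition $\mathcal W$ by splitting one block or inserting a new block; each time $\Delta_j$ hits a factor $(\Delta^{\omega_j}S)$, it increments $\omega_j$ by $\mathbf e_j$. Tracking how the arguments shift shows that the bookkeeping quantity $\omega^*_j = \sum_{l>j}\omega_l$ is exactly what is needed to record the accumulated shift on each factor, and that after reindexing the sum ranges over $\mathcal W \in \mathcal Z_\alpha$, with the $(S^{-1})_{\mathbf k - \alpha}$ prefactor coming from the total shift $\sum_j \omega_j = \alpha$.

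The main obstacle is purely combinatorial rather than conceptual: verifying that the shifts in the arguments of the factors reorganize consistently under the inductive step, i.e.\ that the quantities $\omega^*_j$ transform correctly when a block is split or a new block is inserted, and that the set of decompositions produced is precisely $\mathcal Z_\alpha$ without omission or repetition (the constants $c_{\mathcal W}$ absorbing any multiplicities and signs). I would handle this by carrying out the induction step carefully in the one-variable case first — where $\mathcal Z_\alpha$ for $\alpha = k \in \N$ is the set of ordered compositions of $k$ — and then noting that the multivariate case follows since the $\Delta_j$ commute and each application only ever acts in a single direction, so the block structure is inherited coordinatewise. No UMD or $\mathcal R$-boundedness input is needed here; this is an algebraic identity in $\mathcal L(X)$.
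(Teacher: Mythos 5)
Your part a) is essentially the paper's own argument: the one-step identity $\Delta_j(ST)_{\mathbf k}=(\Delta_j S)_{\mathbf k}T_{\mathbf k}+S_{\mathbf k-\mathbf e_j}(\Delta_j T)_{\mathbf k}$ is exactly the paper's base step, and iterating it (whether coordinate-by-coordinate as you propose, or by induction on $|\alpha|$ as in the paper) gives the stated formula with the shift $\mathbf k-\beta$ on the $S$-factor; since the terms stay in the order ``$S$-factor left, $T$-factor right,'' no commutativity is in fact needed, as you observe. For part b) you take a genuinely different route. The paper deduces b) from a) in one stroke: applying the Leibniz rule to the constant sequence $SS^{-1}=\mathrm{id}$ gives $0=S_{\mathbf k-\alpha}(\Delta^\alpha S^{-1})_{\mathbf k}+\sum_{\beta<\alpha}c_{\alpha\beta}(\Delta^{\alpha-\beta}S)_{\mathbf k-\beta}(\Delta^\beta S^{-1})_{\mathbf k}$; solving for $(\Delta^\alpha S^{-1})_{\mathbf k}$, multiplying by $(S^{-1})_{\mathbf k-\alpha}$ and inserting the inductive hypothesis for each $\beta<\alpha$ turns each decomposition $\mathcal W\in\mathcal Z_\beta$ into the decomposition $(\alpha-\beta,\omega^1,\dots,\omega^r)\in\mathcal Z_\alpha$, with the new first block automatically carrying the correct shift $\mathbf k-\beta$, so the reindexing is immediate and no tracking of how blocks split is needed. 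Your route instead peels off one $\Delta_j$, differentiates the inductive product formula factor by factor via the product rule, and uses the sandwich identity $\Delta_j S^{-1}=-(S^{-1})_{\cdot-\mathbf e_j}(\Delta_j S)(S^{-1})$; this works — the shift bookkeeping you describe is consistent (a hit on a $\Delta^{\omega_j}S$ factor increments that block, a hit on an $S^{-1}$ factor inserts a new block $\mathbf e_j$, and in both cases the factors to the left pick up the extra shift $\mathbf e_j$, matching the updated $\omega^*_l$ and moving the prefactor to $\mathbf k-\alpha$) — but it is combinatorially heavier, and since the $c_{\mathcal W}$ are unspecified integers you do not even need to worry about which decompositions occur or with what multiplicity, only that every term produced has the stated form. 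One caution: your suggestion that the multivariate case of b) ``follows coordinatewise'' from the one-variable case is not literally true (unlike a), the formula for $S^{-1}$ does not factor over coordinates), but your actual inductive step $\Delta^\alpha=\Delta_j\Delta^{\alpha-\mathbf e_j}$ is carried out directly in the multivariate setting, so this is a loose remark rather than a gap.
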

\begin{proof}
We will show both assertions by induction on $|\alpha|$, the case $|\alpha|=0$ being obvious.

a) By definition, we have
\[
	\Delta^{\mathbf e_j}(ST)_\mathbf k = (ST)_\mathbf k - (ST)_{\mathbf k - \mathbf e_j}
	= S_{\mathbf k - \mathbf e_j} (\Delta^{\mathbf e_j} T)_\mathbf k + (\Delta^{\mathbf e_j} S)_\mathbf k T_\mathbf k,
\]
and for $\alpha' := \alpha - \mathbf e_j$ where $\alpha_j \neq 0$ we obtain
\begin{align*}
	\Delta^{\alpha}(ST)_\mathbf k & =
	\Delta^{\mathbf e_j}\sum\limits_{\beta \leq \alpha'} c_{\alpha'\beta}
	(\Delta^{\alpha' - \beta}S)_{\mathbf k - \beta} (\Delta^{\beta} T)_\mathbf k\\
	&=  \sum\limits_{\beta \leq \alpha'} c_{\alpha'\beta}
	\Big( (\Delta^{\alpha'  - \beta}S)_{\mathbf k - (\beta + \mathbf e_j)} (\Delta^{\beta + \mathbf e_j} T)_\mathbf k
	+	(\Delta^{\alpha' + \mathbf e_j - \beta}S)_{\mathbf k - \beta} (\Delta^{\beta} T)_\mathbf k \Big)\\
	&=  \sum\limits_{\beta \leq \alpha} c_{\alpha\beta} (\Delta^{\alpha - \beta}S)_{\mathbf k - \beta} (\Delta^{\beta} T)_\mathbf k.
\end{align*}

b) For $|\alpha|\ge 1$, we apply a)  to $S S^{-1}$ and get
\begin{align*}
	0 = \Delta^{\alpha}(S S^{-1})_\mathbf k & =  S_{\mathbf k - \alpha} (\Delta^\alpha S^{-1})_\mathbf k
	+ \sum\limits_{\beta < \alpha} c_{\alpha\beta} (\Delta^{\alpha - \beta}S)_{\mathbf k - \beta} (\Delta^{\beta} S^{-1})_\mathbf k.
\end{align*}
Hence
\begin{align*}
	(\Delta^\alpha & S^{-1})_\mathbf k  =
	- (S^{-1})_{\mathbf k - \alpha}  \sum\limits_{\beta < \alpha} c_{\alpha\beta}
		(\Delta^{\alpha - \beta}S)_{\mathbf k - \beta} (\Delta^{\beta} S^{-1})_\mathbf k\\
	& = - \sum\limits_{\beta < \alpha} \sum\limits_{\mathcal W \in \mathcal Z_\beta} c_\mathcal W
		(S^{-1})_{\mathbf k - \alpha}  (\Delta^{\alpha - \beta}S)_{\mathbf k - \beta} (S^{-1})_{\mathbf k - \beta}
			\prod\limits_{j = 1}^{r_\mathcal W} (\Delta^{\omega_j}S)_{\mathbf k - \omega^*_j}  (S^{-1})_{\mathbf k - \omega^*_j}\\
	& = \sum\limits_{\mathcal W \in \mathcal Z_\alpha} c_\mathcal W
		(S^{-1})_{\mathbf k - \alpha}  (\Delta^{\omega_1}S)_{\mathbf k - \omega_1^*} (S^{-1})_{\mathbf k - \omega_1^*}
			\prod\limits_{j = 2}^{r_\mathcal W} (\Delta^{\omega_j}S)_{\mathbf k - \omega^*_j}  (S^{-1})_{\mathbf k - \omega^*_j}.
\end{align*}
\end{proof}
\begin{definition}
Consider a polynomial $P: \mathbb R^n \rightarrow \mathbb C;\ \xi \mapsto P(\xi)$
and let $P^\#$ denote its principal part.

a) $P$ is called elliptic if $P^\#(\xi) \neq 0$ for $\xi \in \mathbb R^n \setminus \{\mathbf 0\}$.

b) Let $\phi\in(0,\pi)$ and let $\Sigma_\phi:= \{\lambda\in\C\setminus\{0\}: |\arg(\lambda)| <
\phi\}$ be the open sector with angle $\phi$.
Then $P$ is called parameter-elliptic in $\overline{\Sigma}_{\pi-\phi}$ if
$\lambda + P^\#(\xi) \neq 0$ for $(\lambda,\xi) \in \overline{\Sigma}_{\pi-\phi} \times \mathbb R^n \setminus \{(0,\mathbf 0)\}$. In this case,
\[\varphi_P:= \inf\{\phi\in(0,\pi): P \textrm{ is parameter-elliptic in }\overline{\Sigma}_{\pi-\phi}\}\]
is called the angle of parameter-ellipticity of $P$.
\end{definition}
\begin{remark}\label{rem_ellptic}
a) By quasi-homogeneity of $(\lambda,\xi)\mapsto \lambda+P^\#(\xi)$, we easily see that
$P$ is parameter-elliptic in
$\overline{\Sigma}_{\pi-\phi}$ if
and only if for all polynomials $N$ with $\deg N \leq \deg P$ there exist $C > 0$ and
a bounded subset $G \subset \mathbb R^n$ such that the estimate
$|\xi|^m |N(\xi)| \leq C |\lambda + P(\xi)|$ holds
for all $\lambda \in \overline{\Sigma}_{\pi-\phi}$, all $0 \leq m \leq \deg P - \deg N$
and all $\xi \in \mathbb R^n \setminus G$.

b) In the same way,
$P$ is elliptic if and only if the assertion in a) is valid for $\lambda = 0$.

c) By induction, one can see that for $|\alpha| \le \deg P$ the discrete polynomial
$\Delta^\alpha P(\mathbf k)$ defines a polynomial of degree not greater than
$\deg P-|\alpha|$. If $P$ is elliptic, this implies $|\mathbf k|^{|\alpha|}|\Delta^{\alpha}
P(\mathbf k)|\le C |P(\mathbf k)|\;(\mathbf k\in\Z^n\setminus G)$ with a finite set $G\subset\Z^n$.
\end{remark}
\begin{proposition}\label{1regPolyMult}
Let $A$ be a closed linear operator in a UMD space $X$.
Consider polynomials
$P,Q: \mathbb Z^n \rightarrow \mathbb C$
such that
\begin{itemize}
\item $P$ and $Q$ are  elliptic,
\item $\big(P(\mathbf k) + Q(\mathbf k)A \big)^{-1}$ exists for all $\mathbf k \in \mathbb Z^n$,
\item $\big \{ P(\mathbf k) \big(P(\mathbf k) + Q(\mathbf k) A \big)^{-1}:
			\mathbf k \in \mathbb Z^n \big\}$ is $\mathcal R$-bounded.
\end{itemize}
Then for every polynomial $N$ with
$\deg N \leq \deg P$ the map
\[
	M: \mathbb Z^n \rightarrow \mathcal L(X): \quad
	\mathbf k \mapsto N(\mathbf k)
	\big(P(\mathbf k) + Q(\mathbf k) A \big)^{-1}
\]
defines an $L^p$-multiplier for $1 < p < \infty$.
\end{proposition}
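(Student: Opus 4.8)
The plan is to reduce everything to Theorem~\ref{Bu}~a): since $X$ is a UMD space, it is enough to exhibit one $\mathcal R$-bounded family $\mathcal T\subset\mathcal L(X)$ with
\[
\big\{|\mathbf k|^{|\gamma|}\,\Delta^\gamma M(\mathbf k):\ \mathbf k\in\Z^n,\ \mathbf 0\le\gamma\le\mathbf 1\big\}\subset\mathcal T
\]
(in particular, property~$(\alpha)$ will not be needed). Write $S_\mathbf k:=P(\mathbf k)+Q(\mathbf k)A$, so that $M(\mathbf k)=N(\mathbf k)\,(S^{-1})_\mathbf k$. Two structural facts drive the argument. First, the resolvent-type identity
\[
Q(\mathbf k)A\,(S^{-1})_\mathbf k=\big(S_\mathbf k-P(\mathbf k)\big)(S^{-1})_\mathbf k=I-P(\mathbf k)(S^{-1})_\mathbf k ,
\]
which together with Lemma~\ref{HintSummeKahane}~a) shows that $\{Q(\mathbf k)A(S^{-1})_\mathbf k:\mathbf k\in\Z^n\}$ is $\mathcal R$-bounded (a singleton plus the family assumed $\mathcal R$-bounded). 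Second, Lemma~\ref{tec_lemma}, which differentiates $M$ and, crucially, keeps every discrete derivative $\Delta^{\omega}S$ composed, with a matching argument, with a factor $(S^{-1})$ — so that only the two ``good'' operator blocks $P(\mathbf k)(S^{-1})_\mathbf k$ and $A(S^{-1})_\mathbf k$ ever occur.

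Fix a finite set $G\subset\Z^n$ large enough that, for $\mathbf k\notin G$, the points $\mathbf k-\beta$ and $\mathbf k-\omega^*_j$ occurring below avoid the (finite) zero sets of $P$ and of $Q$ and satisfy $|P(\cdot)|\gtrsim|\cdot|^{\deg P}$, $|\cdot|^{|\omega|}|\Delta^\omega P(\cdot)|\lesssim|P(\cdot)|$ and the analogues for $Q$ from Remark~\ref{rem_ellptic}, while $|\mathbf k|$ stays comparable to each $|\mathbf k-\beta|$ and $|\mathbf k-\omega^*_j|$. For $\mathbf k\in G$ each $\Delta^\gamma M(\mathbf k)$ is a fixed bounded operator (the inverse $(S^{-1})_\mathbf k$ is bounded by the closed graph theorem), so that part of the family is finite and hence $\mathcal R$-bounded. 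For $\mathbf k\notin G$ I would apply the Leibniz rule of Lemma~\ref{tec_lemma}~a) to $M=N\cdot S^{-1}$, then expand each $\Delta^\beta(S^{-1})_\mathbf k$ by Lemma~\ref{tec_lemma}~b), and use $|\gamma|=|\gamma-\beta|+\sum_j|\omega_j|$ to distribute the weight. This writes $|\mathbf k|^{|\gamma|}\Delta^\gamma M(\mathbf k)$ as a finite sum over $\beta\le\gamma$ and $\mathcal W=(\omega_1,\dots,\omega_{r_\mathcal W})\in\mathcal Z_\beta$ of terms
\[
c\,\Big(|\mathbf k|^{|\gamma-\beta|}(\Delta^{\gamma-\beta}N)(\mathbf k-\beta)\,(S^{-1})_{\mathbf k-\beta}\Big)\prod_{j=1}^{r_\mathcal W}\Big(|\mathbf k|^{|\omega_j|}(\Delta^{\omega_j}S)_{\mathbf k-\omega^*_j}(S^{-1})_{\mathbf k-\omega^*_j}\Big).
\]

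It then remains to check that each of the displayed factors ranges over an $\mathcal R$-bounded set. For the $j$-th product factor I would write, with $\mathbf m:=\mathbf k-\omega^*_j$ and using $\Delta^{\omega_j}S_\mathbf m=\Delta^{\omega_j}P(\mathbf m)\,I+\Delta^{\omega_j}Q(\mathbf m)\,A$,
\[
|\mathbf m|^{|\omega_j|}(\Delta^{\omega_j}S)_\mathbf m(S^{-1})_\mathbf m=\frac{|\mathbf m|^{|\omega_j|}\Delta^{\omega_j}P(\mathbf m)}{P(\mathbf m)}\,P(\mathbf m)(S^{-1})_\mathbf m+\frac{|\mathbf m|^{|\omega_j|}\Delta^{\omega_j}Q(\mathbf m)}{Q(\mathbf m)}\,Q(\mathbf m)A(S^{-1})_\mathbf m ,
\]
where by Remark~\ref{rem_ellptic}~c) (ellipticity of $P$ and of $Q$) both scalar prefactors are bounded uniformly in $\mathbf m$; hence, by the contraction principle of Kahane (Lemma~\ref{HintSummeKahane}~b)) together with Lemma~\ref{HintSummeKahane}~a) and the $\mathcal R$-boundedness of $\{P(\mathbf k)(S^{-1})_\mathbf k\}$ and $\{Q(\mathbf k)A(S^{-1})_\mathbf k\}$, this factor stays in an $\mathcal R$-bounded set. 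For the first factor I would write $(S^{-1})_{\mathbf k-\beta}=P(\mathbf k-\beta)^{-1}P(\mathbf k-\beta)(S^{-1})_{\mathbf k-\beta}$; since $\deg(\Delta^{\gamma-\beta}N)\le\deg N-|\gamma-\beta|\le\deg P-|\gamma-\beta|$ (Remark~\ref{rem_ellptic}~c), the resulting scalar $|\mathbf k|^{|\gamma-\beta|}(\Delta^{\gamma-\beta}N)(\mathbf k-\beta)/P(\mathbf k-\beta)$ is $O(|\mathbf k|^{\deg N-\deg P})=O(1)$ — this is the single point where the hypothesis $\deg N\le\deg P$ enters — so this factor too stays in an $\mathcal R$-bounded set. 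As products of $\mathcal R$-bounded families, and finite unions and sums of $\mathcal R$-bounded families, are $\mathcal R$-bounded (Lemma~\ref{HintSummeKahane}~a)), the whole set above is $\mathcal R$-bounded, and Theorem~\ref{Bu}~a) yields that $M$ is an $L^p$-multiplier for every $1<p<\infty$.

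I expect the only genuinely delicate point — as opposed to the lengthy but mechanical scalar bookkeeping — to be the legitimacy of applying Lemma~\ref{tec_lemma}~b) to the sequence $S_\mathbf k=P(\mathbf k)+Q(\mathbf k)A$, which is unbounded whenever $A$ is, whereas the lemma is phrased for bounded operators. This is harmless: the identity there is a purely algebraic consequence of part~a), and on its right-hand side every $\Delta^{\omega_j}S$ is immediately followed by an $(S^{-1})$ with the same argument, while $(S^{-1})_\mathbf m$ maps $X$ into $D(A)$ and each pair $(\Delta^{\omega_j}S)_\mathbf m(S^{-1})_\mathbf m=\Delta^{\omega_j}P(\mathbf m)(S^{-1})_\mathbf m+\Delta^{\omega_j}Q(\mathbf m)\,Q(\mathbf m)^{-1}\big(I-P(\mathbf m)(S^{-1})_\mathbf m\big)$ is bounded — so no unbounded operator is ever left exposed and every product above is a genuine product of bounded operators. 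Alternatively one checks the first-order identity $(S^{-1})_\mathbf k-(S^{-1})_{\mathbf k-\mathbf e_j}=(S^{-1})_\mathbf k\big(S_{\mathbf k-\mathbf e_j}-S_\mathbf k\big)(S^{-1})_{\mathbf k-\mathbf e_j}$ directly, using injectivity of $S_\mathbf k$, and runs the induction exactly as in the proof of Lemma~\ref{tec_lemma}.
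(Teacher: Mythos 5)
Your proposal is correct and follows essentially the same route as the paper: expand $|\mathbf k|^{|\gamma|}\Delta^\gamma M(\mathbf k)$ via Lemma~\ref{tec_lemma}, control the scalar factors $|\mathbf k|^{|\omega|}\Delta^{\omega}P/P$, $|\mathbf k|^{|\omega|}\Delta^{\omega}Q/Q$ and $|\mathbf k|^{|\gamma-\beta|}\Delta^{\gamma-\beta}N/P$ by ellipticity (Remark~\ref{rem_ellptic}), absorb them with Kahane's contraction principle into the $\mathcal R$-bounded families $\{P(\mathbf k)(P(\mathbf k)+Q(\mathbf k)A)^{-1}\}$ and $\{Q(\mathbf k)A(P(\mathbf k)+Q(\mathbf k)A)^{-1}\}$ (the latter via the identity $Q A(P+QA)^{-1}=\mathrm{id}-P(P+QA)^{-1}$), and conclude with Theorem~\ref{Bu}~a). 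Your additional remarks on the finite exceptional set and on applying Lemma~\ref{tec_lemma} to the sequence containing the unbounded operator $A$ only make explicit points the paper leaves implicit, and do not change the argument.
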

\begin{proof}
Lemma \ref{tec_lemma} yields
\begin{align*}
	|\mathbf k|^{|\gamma|} & \Delta^\gamma M (\mathbf k)  = \sum\limits_{\beta \leq \gamma}
			\sum\limits_{\mathcal W \in \mathcal Z_\beta} c_\mathcal W
			|\mathbf k|^{|\gamma - \beta|}(\Delta^{\gamma - \beta} N) (\mathbf k - \beta)\big(P(\mathbf k - \beta) + Q(\mathbf k - \beta) A \big)^{-1}\\
	& \cdot  \prod\limits_{j = 1}^{r_\mathcal W} |\mathbf k|^{|\omega_j|}
			\big( \Delta^{\omega_j} P(\mathbf k - \omega^*_j) + \Delta^{\omega_j}Q(\mathbf k - \omega^*_j) A \big)
			\big(P(\mathbf k - \omega^*_j) + Q(\mathbf k - \omega^*_j) A \big)^{-1}.
\end{align*}
By Remark~\ref{rem_ellptic}, we know that $\deg(\Delta^{\gamma-\beta}N)\le \deg N-|\gamma-\beta|$. This and the ellipticity of $P$ imply $|\mathbf k|^{|\gamma-\beta|}|\Delta^{\gamma-\beta}N(\mathbf k)|\le C|P(\mathbf k)|$ for $\mathbf k\in\Z^n\setminus G$ with a finite set $G\subset\Z^n$. By Kahane's contraction principle, we obtain the $\mathcal R$-boundedness of
\[ \Big\{ |\mathbf k|^{|\gamma-\beta|}\Delta^{\gamma-\beta}N(\mathbf k-\beta)\big( P(\mathbf k-\beta)+
Q(\mathbf k-\beta)A\big)^{-1}: \mathbf k\in\Z^n\setminus G\Big\}.\]
Since
\[
	Q(\mathbf k)A\big(P(\mathbf k) + Q(\mathbf k)A\big)^{-1} =
	\textrm{id}_X - P(\mathbf k) \big(P(\mathbf k) + Q(\mathbf k)A\big)^{-1},
\]
in the same way the $\mathcal R$-boundedness of
\[ \Big\{ |\mathbf k|^{|\omega_j|}\Delta^{\omega_j}Q(\mathbf k- \omega^*_j)A
\big( P(\mathbf k- \omega^*_j)+
Q(\mathbf k- \omega^*_j)A\big)^{-1}: \mathbf k\in\Z^n\setminus G\Big\}\]
follows from the ellipticity of $Q$.
Now the assertion follows from Lemma~\ref{HintSummeKahane} and Theorem~\ref{Bu}.
\end{proof}
Proposition \ref{1regPolyMult} is closely related to the concept of 1-regularity of
complex-valued sequences, introduced in \cite{Keyantuo-Lizama-2004} for the one dimensional case $n = 1$.
In fact, if $Q(\mathbf k) \neq 0$ for all
$\mathbf k \in \mathbb Z^n$, we may write $M(\mathbf k)= \frac{N(\mathbf k)}{Q(\mathbf k)}\big(\frac{P(\mathbf k)}{Q(\mathbf k)} + A\big)^{-1}$.
Hence, for $n = 1$ we enter the framework of \cite[Proposition 5.3]{Keyantuo-Lizama-Poblete-2009}, i.e.
$M(k) = a_k(b_k - A)^{-1}$ with $(a_k)_{k \in \mathbb Z}, (b_k)_{k \in \mathbb Z} \subset \mathbb C$. We will give a generalization of this concept to arbitrary $n$ and briefly indicate the connection to the results above.
\begin{definition}\label{1regular}
We call a pair of sequences
$(a_\mathbf k, b_\mathbf k)_{\mathbf k \in \mathbb Z^n} \subset \mathbb C^2$
 {$1$-regular} if for all
$\mathbf 0 \leq \gamma \leq \mathbf 1$
there exist a finite set $K\subset\Z^n$ and a constant $C > 0$
such that
\begin{equation}\label{1reg1}
 |\mathbf k^\gamma| \max \{ |(\Delta^\gamma a)_\mathbf k|, |(\Delta^\gamma b)_\mathbf k|\} \leq C |b_\mathbf k|
 \quad(\mathbf k\in\Z^n\setminus K).
\end{equation}
We say the pair $(a_\mathbf k, b_\mathbf k)_{\mathbf k \in \mathbb Z^n}$
is {strictly $1$-regular} if
$|\mathbf k^\gamma|$ can be replaced by $|\mathbf k|^{|\gamma|}$ in
\eqref{1reg1}. A  sequence $(a_\mathbf k)_{\mathbf k \in \mathbb Z^n}$
is called (strictly) 1-regular if $(a_\mathbf k,a_\mathbf k)_{\mathbf k \in \mathbb Z^n}$
has this property.
\end{definition}
\begin{remark}\label{rem1reg}
a) In the case $n = 1$,
a sequence $(a_k)_{k \in \mathbb Z}\subset\mathbb C\setminus\{0\}$
 is 1-regular in $\mathbb Z$ in the sense of Definition \ref{1regular} if and only if
the sequence
$ \big( \frac{k (a_{k+1} - a_{k})}{a_{k}} \big)_{k \in \mathbb Z}$
is bounded. Hence our definition extends the one from \cite{Keyantuo-Lizama-2004}
for a sequence $(a_k)_{k \in \mathbb Z}$.

b) With $\gamma = 0$ the definition especially requests
$|a_\mathbf k| \leq C |b_\mathbf k|$ for $\mathbf k \in \mathbb Z^n \setminus K$.

c) Strict 1-regularity implies 1-regularity. If $n = 1$ both concepts are equivalent.

d) Subject to the assumptions of Proposition \ref{1regPolyMult}, let
$Q(\mathbf k) \neq 0$ for $\mathbf k \in\Z^n$. Then
the pair
$\big(\frac{N(\mathbf k)}{Q(\mathbf k)},\frac{P(\mathbf k)}{Q(\mathbf k)}\big)_{\mathbf k \in \mathbb Z^n}$
is strictly 1-regular.

e) Again from Lemma \ref{tec_lemma} we deduce the following variant of Proposition \ref{1regPolyMult}:
Let $b_\mathbf k \in \rho(A)$ for all $\mathbf k \in \mathbb Z^n$, let
$\mathcal R (\{b_\mathbf k (b_\mathbf k - A)^{-1}:\ \mathbf k \in \mathbb Z^n \setminus G\}) < \infty$
for some finite subset $G \subset \mathbb Z^n$, and let $(a_\mathbf k, b_\mathbf k)_{\mathbf k \in \mathbb Z^n}$
be strictly 1-regular.
Then $M(\mathbf k) := a_\mathbf k (b_\mathbf k - A)^{-1}$ defines a Fourier multiplier.
\end{remark}

\section{$\nu$-periodic boundary value problems}
\begin{definition}
Let $X$ be a Banach space,  $m \in \mathbb N_0$, $n \in \mathbb N$ and $\nu \in \mathbb C^n$.
We set $D^\alpha:=D_1^{\alpha_1}\ldots D_n^{\alpha_n}$ with $D_j=-i\frac\partial{\partial j}$ and denote by $W_{\nu, per}^{m,p}(\mathcal Q_n,X)$ the space of all
$u \in W^{m,p}(\mathcal Q_n,X)$ such that
for all $j \in \{1, \ldots, n\}$ and all $|\alpha| < m$
it holds that
\[
	(D^\alpha u)|_{x_j = 2\pi} = e^{2\pi \nu_j} (D^\alpha u)|_{x_j = 0}.
\]
For sake of convenience we set $W_{per}^{m,p}(\mathcal Q_n,X) := W_{0, per}^{m,p}(\mathcal Q_n,X)$.
\end{definition}

We give some helpful characterizations of the space $W^{m,p}_{\nu, per}(\mathcal Q_n,X)$
where we omit the rather simple proof.
\begin{lemma}\label{Wnuper}
The following assertions are equivalent:
\begin{itemize}
\item[(i)]	$u \in W_{\nu,per}^{m,p}(\mathcal Q_n,X)$.
\item[(ii)]	$u \in W^{m,p}(\mathcal Q_n,X)$ and
						for all $|\alpha| \leq m$ it holds that
						\[
							(e^{-\nu \cdot}D^\alpha u)\hat~ (\mathbf k)
							= (\mathbf k - i\nu)^\alpha (e^{-\nu \cdot}u)\hat~ (\mathbf k)
						\]
						for all $\mathbf k \in \mathbb Z^n$.
\item[(iii)]There exists $v \in  W_{per}^{m,p}(\mathcal Q_n,X)$ such that
						$u = e^{\nu \cdot} v$.
\end{itemize}
\end{lemma}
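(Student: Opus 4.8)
The plan is to prove $(iii)\Leftrightarrow(i)$ and $(iii)\Leftrightarrow(ii)$, using $(iii)$ as the convenient hub; we may assume $m\ge 1$, since for $m=0$ all three conditions merely say $u\in L^p(\mathcal Q_n,X)$. Two elementary facts do the work. First, pointwise multiplication by the smooth function $e^{\nu\cdot}\colon x\mapsto e^{\nu_1x_1+\dots+\nu_nx_n}$ and by $e^{-\nu\cdot}$ are mutually inverse isomorphisms of $W^{m,p}(\mathcal Q_n,X)$, and they conjugate the derivatives according to $D^\alpha(e^{\nu\cdot}v)=e^{\nu\cdot}(D-i\nu)^\alpha v$ and $D^\beta(e^{-\nu\cdot}u)=e^{-\nu\cdot}(D+i\nu)^\beta u$, where $(D\mp i\nu)^\gamma:=\prod_{j=1}^n(D_j\mp i\nu_j)^{\gamma_j}$ is, by the binomial theorem, a linear combination with constant coefficients of the $D^\delta$ with $\delta\le\gamma$; both identities follow by induction from $D_j(e^{\pm\nu\cdot}w)=e^{\pm\nu\cdot}(D_j\mp i\nu_j)w$. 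Second, for $w\in W^{1,p}(\mathcal Q_n,X)$ integration by parts in the variable $x_j$, using $e^{-2\pi ik_j}=1$ for $k_j\in\Z$, gives
\[
  (D_jw)\hat~(\mathbf k)=k_j\hat w(\mathbf k)+c\,\big(w|_{x_j=2\pi}-w|_{x_j=0}\big)\hat~(\mathbf k')\qquad\big(\mathbf k=(\mathbf k',k_j)\in\Z^n\big)
\]
with a fixed constant $c\neq 0$, where $\hat~(\mathbf k')$ on the right denotes the $(n-1)$-dimensional Fourier coefficient over the remaining face; hence by Fejér's theorem in $n-1$ variables, $(D_jw)\hat~(\mathbf k)=k_j\hat w(\mathbf k)$ for all $\mathbf k\in\Z^n$ if and only if $w|_{x_j=2\pi}=w|_{x_j=0}$.

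For $(iii)\Rightarrow(i)$, write $u=e^{\nu\cdot}v$ with $v\in W^{m,p}_{per}(\mathcal Q_n,X)$; for $|\alpha|<m$, $D^\alpha u=e^{\nu\cdot}(D-i\nu)^\alpha v$ and $(D-i\nu)^\alpha v$ is a linear combination of the $D^\delta v$ with $|\delta|\le|\alpha|<m$, which have equal traces on $\{x_j=0\}$ and $\{x_j=2\pi\}$; since $e^{\nu\cdot}|_{x_j=2\pi}=e^{2\pi\nu_j}(e^{\nu\cdot}|_{x_j=0})$, this gives $(D^\alpha u)|_{x_j=2\pi}=e^{2\pi\nu_j}(D^\alpha u)|_{x_j=0}$. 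For $(i)\Rightarrow(iii)$, set $v:=e^{-\nu\cdot}u\in W^{m,p}(\mathcal Q_n,X)$; for $|\beta|<m$, $D^\beta v=e^{-\nu\cdot}(D+i\nu)^\beta u$ is a combination of the $e^{-\nu\cdot}D^\delta u$ with $|\delta|<m$, and inserting $(D^\delta u)|_{x_j=2\pi}=e^{2\pi\nu_j}(D^\delta u)|_{x_j=0}$ together with $e^{-\nu\cdot}|_{x_j=2\pi}=e^{-2\pi\nu_j}(e^{-\nu\cdot}|_{x_j=0})$ makes the factors $e^{\pm2\pi\nu_j}$ cancel, so $(D^\beta v)|_{x_j=2\pi}=(D^\beta v)|_{x_j=0}$; hence $v\in W^{m,p}_{per}(\mathcal Q_n,X)$ and $u=e^{\nu\cdot}v$.

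For $(iii)\Rightarrow(ii)$, with $v:=e^{-\nu\cdot}u\in W^{m,p}_{per}(\mathcal Q_n,X)$ an induction on $|\beta|$ using the second fact (applied to $w=D^{\beta-\mathbf e_j}v$, which is periodic in direction $j$ whenever $|\beta|\le m$) gives $(D^\beta v)\hat~(\mathbf k)=\mathbf k^\beta\hat v(\mathbf k)$ for all $|\beta|\le m$ and $\mathbf k\in\Z^n$; expanding $(D-i\nu)^\alpha$ and using $e^{-\nu\cdot}D^\alpha u=(D-i\nu)^\alpha v$ then gives $(e^{-\nu\cdot}D^\alpha u)\hat~(\mathbf k)=(\mathbf k-i\nu)^\alpha\hat v(\mathbf k)=(\mathbf k-i\nu)^\alpha(e^{-\nu\cdot}u)\hat~(\mathbf k)$ for $|\alpha|\le m$, which is $(ii)$. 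For $(ii)\Rightarrow(iii)$, again set $v:=e^{-\nu\cdot}u\in W^{m,p}(\mathcal Q_n,X)$; expanding $(D-i\nu)^\alpha$ in $e^{-\nu\cdot}D^\alpha u=(D-i\nu)^\alpha v$, hypothesis $(ii)$ reads $\sum_{\delta\le\alpha}c_{\alpha,\delta}\big((D^\delta v)\hat~(\mathbf k)-\mathbf k^\delta\hat v(\mathbf k)\big)=0$ with $c_{\alpha,\alpha}=1$, and induction on $|\alpha|$ (the terms with $\delta<\alpha$ vanishing by the inductive hypothesis) yields $(D^\alpha v)\hat~(\mathbf k)=\mathbf k^\alpha\hat v(\mathbf k)$ for all $|\alpha|\le m$; in particular $(D_j(D^\beta v))\hat~(\mathbf k)=k_j(D^\beta v)\hat~(\mathbf k)$ for $|\beta|<m$ and all $\mathbf k$, so the second fact forces $(D^\beta v)|_{x_j=2\pi}=(D^\beta v)|_{x_j=0}$, i.e.\ $v\in W^{m,p}_{per}(\mathcal Q_n,X)$ and $u=e^{\nu\cdot}v$.

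The only step that is not plain multi-index bookkeeping is the second elementary fact: carrying the integration by parts out in the Bochner setting, identifying the boundary term with the $(n-1)$-dimensional Fourier coefficients of the trace jump of $w$ (which uses that $W^{1,p}$-functions possess $L^p$-traces on the faces of $\mathcal Q_n$), and invoking Fejér's theorem to pass between the vanishing of that term for all $\mathbf k$ and the vanishing of the trace jump. This is precisely the bridge between the purely Fourier-side condition $(ii)$ and the trace conditions defining $W^{m,p}_{\nu,per}(\mathcal Q_n,X)$, and it is the point I would expect to need the most care.
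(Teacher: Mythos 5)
Your proof is correct: the conjugation identities $D^\alpha(e^{\pm\nu\cdot}w)=e^{\pm\nu\cdot}(D\mp i\nu)^\alpha w$ give (i)$\Leftrightarrow$(iii), and the integration-by-parts formula combined with uniqueness of Fourier coefficients (Fej\'er) correctly bridges (ii)$\Leftrightarrow$(iii), including the trace subtlety you flag. The paper explicitly omits the proof of this lemma as ``rather simple,'' and your argument is exactly the routine verification the authors have in mind, so there is nothing further to compare.
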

The following lemma characterizes multipliers such that the associated
operators map
$L^p(\mathcal Q_n,X)$ into $W_{per}^{\alpha,p}(\mathcal Q_n,X)$.
The proof follows the one for the case $n = 1$ of \cite[Lemma~2.2]{Arendt-Bu-2002}.
\begin{lemma}\label{MultWper}
Let $1 \leq p < \infty$, $m \in \mathbb N$ and
$M\colon \mathbb Z^n \rightarrow \mathcal L(X)$.
Then the following assertions are equivalent:
\begin{itemize}
\item[(i)]	$M$ is an $L^p$-multiplier such that the associated
						operator $T_M \in \mathcal L(L^p(\mathcal Q_n,X))$ maps
						$L^p(\mathcal Q_n,X)$ into $W_{per}^{m,p}(\mathcal Q_n,X)$.
\item[(ii)]	$M_\alpha\colon \mathbb Z^n \rightarrow \mathcal L(X),\
						\mathbf k \mapsto \mathbf k^\alpha M(\mathbf k)$
						is an $L^p$-multiplier for all $|\alpha| = m$.
\end{itemize}
\end{lemma}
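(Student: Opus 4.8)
For the implication (i)$\Rightarrow$(ii) I would argue directly. If $M$ is an $L^p$-multiplier whose operator $T_M$ maps $L^p(\mathcal Q_n,X)$ into $W^{m,p}_{per}(\mathcal Q_n,X)$, then for $f\in L^p(\mathcal Q_n,X)$ and $|\alpha|=m$ the function $D^\alpha(T_Mf)$ lies in $L^p(\mathcal Q_n,X)$ and, by Lemma~\ref{Wnuper}(ii) with $\nu=0$, has Fourier coefficients $\mathbf k^\alpha\widehat{T_Mf}(\mathbf k)=\mathbf k^\alpha M(\mathbf k)\hat f(\mathbf k)=M_\alpha(\mathbf k)\hat f(\mathbf k)$. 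Hence $M_\alpha$ is an $L^p$-multiplier, with associated operator $D^\alpha\circ T_M$.

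For the converse (ii)$\Rightarrow$(i) the plan is an induction on $n$, modelled on the one-dimensional construction of \cite[Lemma~2.2]{Arendt-Bu-2002}. In the base case $n=1$ one is given that $M_m\colon k\mapsto k^mM(k)$ is a multiplier; since $\widehat{T_{M_m}f}(0)=0^mM(0)\hat f(0)=0$, the function $T_{M_m}f$ has vanishing mean, so one may form its $m$-fold \emph{periodic} primitive (each intermediate primitive chosen with vanishing mean), and adding the constant $M(0)\hat f(0)$ produces $T_Mf$. This construction is elementary, uses no UMD hypothesis on $X$, and yields at once $T_Mf\in W^{m,p}_{per}(\mathcal Q_1,X)$ together with the norm bound.

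For the inductive step write $\mathbf k=(k_1,\mathbf k')\in\Z\times\Z^{n-1}$ and $\mathcal Q_n=\mathcal Q_1\times\mathcal Q_{n-1}$. Testing the multiplier $\mathbf k^{(0,\alpha')}M(\mathbf k)=(\mathbf k')^{\alpha'}M(\mathbf k)$ on functions $e^{ik_1x_1}g(x')$ shows that, for each fixed $k_1$ and each $|\alpha'|=m$, the symbol $\mathbf k'\mapsto(\mathbf k')^{\alpha'}M(k_1,\mathbf k')$ is an $L^p(\mathcal Q_{n-1},X)$-multiplier; thus $M(k_1,\cdot)$ satisfies (ii) in dimension $n-1$, and by the induction hypothesis it is an $L^p(\mathcal Q_{n-1},X)$-multiplier whose operator $\widetilde T(k_1)$ maps into $W^{m,p}_{per}(\mathcal Q_{n-1},X)$; in particular, via Lemma~\ref{Wnuper}, $(\mathbf k')^\gamma M(k_1,\cdot)$ is a multiplier with operator $\widetilde T_\gamma(k_1):=D^{(0,\gamma)}\circ\widetilde T(k_1)$ for every $|\gamma|\le m$. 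Now fix such a $\gamma$ and set $\alpha:=(m-|\gamma|,\gamma)$, so $|\alpha|=m$. By (ii) the map $\mathbf k^\alpha M(\mathbf k)=k_1^{m-|\gamma|}(\mathbf k')^\gamma M(\mathbf k)$ is an $L^p(\mathcal Q_n,X)$-multiplier, and — having the same Fourier coefficients on $\Z\times\Z^{n-1}$ — under the identification $L^p(\mathcal Q_n,X)=L^p(\mathcal Q_1,L^p(\mathcal Q_{n-1},X))$ it is the operator associated with the one-dimensional symbol $k_1\mapsto k_1^{m-|\gamma|}\widetilde T_\gamma(k_1)$; so the latter is a one-dimensional multiplier. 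Applying the $n=1$ case (with order $m-|\gamma|$ and Banach space $L^p(\mathcal Q_{n-1},X)$) yields that $\widetilde T_\gamma(\cdot)$ is a one-dimensional multiplier whose operator $\mathcal T_\gamma$ maps $L^p(\mathcal Q_1,L^p(\mathcal Q_{n-1},X))$ into $W^{m-|\gamma|,p}_{per}(\mathcal Q_1,L^p(\mathcal Q_{n-1},X))$. Taking $\gamma=\mathbf 0$ shows $M$ is a multiplier with $T_M=\mathcal T_{\mathbf 0}$; and since $D^{(0,\gamma)}(T_Mf)=\mathcal T_\gamma f$, it follows that $D^{(j,\gamma)}(T_Mf)=D_1^{\,j}\mathcal T_\gamma f\in L^p(\mathcal Q_n,X)$ whenever $j+|\gamma|\le m$, that is, $D^\beta(T_Mf)\in L^p(\mathcal Q_n,X)$ for all $|\beta|\le m$. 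Periodicity of these derivatives in $x_1$ comes from $\mathcal T_\gamma f\in W^{m-|\gamma|,p}_{per}(\mathcal Q_1,\cdot)$ and in $x'$ from $\widetilde T(k_1)$ taking values in $W^{m,p}_{per}(\mathcal Q_{n-1},X)$ (transferred to $T_Mf$ through the $x_1$-Fourier expansion and the boundedness of the trace maps), so $T_Mf\in W^{m,p}_{per}(\mathcal Q_n,X)$; the norm estimate follows by tracking constants or, since both spaces are Banach, by the closed graph theorem.

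The inductive step is where I expect the main difficulty to lie. One must make the ``fibering'' of higher-dimensional multipliers over the $x_1$-frequency fully precise — both the restriction argument that extracts the cross-sectional symbols $M(k_1,\cdot)$ and the identification of operators through their Fourier coefficients on $\Z\times\Z^{n-1}$ — and, more delicately, one must verify that coupling the \emph{order}-$(m-|\gamma|)$ application of the one-dimensional lemma with the hypothesis at \emph{total order} $m$ delivers exactly the isotropic family $\{D^{(j,\gamma)}(T_Mf):j+|\gamma|\le m\}$ in $L^p$ required for membership in $W^{m,p}(\mathcal Q_n,X)$, and that the periodicity information produced separately in the $x_1$- and $x'$-directions assembles into the single boundary condition defining $W^{m,p}_{per}(\mathcal Q_n,X)$.
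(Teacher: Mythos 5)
Your argument is correct: (i)$\Rightarrow$(ii) follows directly from Lemma~\ref{Wnuper} with $\nu=0$, and for (ii)$\Rightarrow$(i) you reproduce the periodic-primitive construction of \cite[Lemma~2.2]{Arendt-Bu-2002} as the base case and lift it to general $n$ by induction via the fiberwise symbols $M(k_1,\cdot)$ and the operator-valued one-dimensional symbol $k_1\mapsto k_1^{m-|\gamma|}\widetilde T_\gamma(k_1)$ on $L^p(\mathcal Q_{n-1},X)$ — which is exactly the route the paper indicates, since it omits the proof with a pointer to that one-dimensional lemma. The only points to write out are the ones you flag yourself: the restriction argument (Fej\'er's theorem in the $x_1$-variable shows $T_Nf=e^{ik_1x_1}h(x')$ when $f=e^{ik_1x_1}g(x')$), the identity $D^{(0,\gamma)}(T_Mf)=\mathcal T_\gamma f$ (justified by applying Fej\'er means in $x_1$ and closedness of weak derivatives under $L^p$-limits), and the assembly of the periodicity conditions, which is obtained most cleanly by checking the coefficient identities $\bigl(D^\beta T_Mf\bigr)\hat{\ }(\mathbf k)=\mathbf k^\beta\widehat{T_Mf}(\mathbf k)$ for $|\beta|\le m$ and invoking Lemma~\ref{Wnuper}(ii)$\Rightarrow$(i) with $\nu=0$; all of these are routine.
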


Let $X$ be a UMD space  and $A$ be
a closed linear operator  in $X$. With $n \in \mathbb N$ and $\nu \in \mathbb C^n$
we consider the boundary value problem in $\mathcal Q_n$ given by
\begin{equation}\label{RWP1}
\begin{array}{r@{\quad = \quad}l@{\quad}l}
\mathcal A(D)u  & f & \quad (x\in \mathcal Q_n), \\
(D^\beta u)|_{x_j = 2\pi} -  e^{2\pi \nu_j} (D^\beta u)|_{x_j = 0} & 0 &
	\quad(j = 1,\ldots,n;\ |\beta| < m_1).\\
\end{array}
\end{equation}
In view of the boundary conditions, we refer to the boundary
value problem \eqref{RWP1}
as $\nu$-periodic. Here
\[
	\mathcal A(D) := P(D) + Q(D)A
		:=  \sum_{|\alpha|\leq m_1} p_\alpha D^\alpha
						+ \sum_{|\alpha|\leq m_2} q_\alpha D^\alpha A
\]
with $m_1,m_2 \in \mathbb N$, $m_2 \leq m_1$, and
$p_\alpha, q_\alpha \in \mathbb C$.
In what follows, with $m := m_1$ we frequently write
$\mathcal A(D) = \sum_{|\alpha|\leq m} (p_\alpha D^\alpha + q_\alpha D^\alpha A )$
where additional coefficients $q_\alpha$, that is, where $m_2 < |\alpha| \leq m_1$, are understood to be equal to zero.
Besides that we define the complex polynomials
$P(z) := \sum_{|\alpha| \leq m_1} p_\alpha z^\alpha$ and
$Q(z) := \sum_{|\alpha| \leq m_2} q_\alpha z^\alpha$ for $z \in \mathbb C^n$.

\begin{definition}
A solution of the boundary value problem \eqref{RWP1} is understood as a function
$u \in W^{m_1,p}_{\nu,per}(\mathcal Q_n,X) \cap W^{m_2,p}(\mathcal Q_n,D(A))$
such that $\mathcal A(D)u(x) = f(x)$ for almost every $x \in \mathcal Q_n$.
\end{definition}

\begin{remark}
Since the trace operator with respect to one direction
and tangential derivation commute, the $\nu$-periodic boundary conditions
as imposed in \eqref{RWP1} are equivalent
to
\[
	(D_j^\ell u)|_{x_j = 2\pi} -  e^{2\pi \nu_j} (D_j^\ell u)|_{x_j = 0} = 0
	\quad (
	j = 1,\ldots,n,\; 0 \leq \ell < m_1).
\]
\end{remark}

\begin{theorem}\label{maintheorem_ell}
Let $1 < p <\infty$, and  assume $P$ and $Q$ to be elliptic.
Then the following assertions are equivalent:
\begin{itemize}
\item [(i)]		For each $f \in L^p(\mathcal Q_n,X)$ there exists a unique solution of \eqref{RWP1}.	
\item [(ii)]	$\big( P(\mathbf k - i\nu) + Q(\mathbf k - i\nu) A \big)^{-1} \in \mathcal L(X)$ exists	
							for $\mathbf k \in \mathbb Z^n$ and
							\[
								M_\alpha(\mathbf k)	:= \mathbf k^\alpha \big( P(\mathbf k - i\nu) + Q(\mathbf k - i\nu) A \big)^{-1}
							\]
							defines a Fourier multiplier for every $|\alpha| = m_1$.
\item [(iii)]	$\big( P(\mathbf k - i\nu) + Q(\mathbf k - i\nu) A \big)^{-1} \in \mathcal L(X)$ exists
							for $\mathbf k \in \mathbb Z^n$ and for all $|\alpha| = m_1$ there exists
							a finite subset $G \subset \mathbb Z^n$ such that the sets
							$
								\{M_\alpha(\mathbf k);\ \mathbf k \in \mathbb Z^n \setminus G\}
							$
							are $\mathcal R$-bounded.
\end{itemize}
\end{theorem}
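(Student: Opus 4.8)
The plan is to reduce everything to the periodic case by the substitution $u = e^{\nu\cdot}v$ from Lemma~\ref{Wnuper}(iii), and then to pass between the boundary value problem and the family of ``symbol'' operators $P(\mathbf k-i\nu)+Q(\mathbf k-i\nu)A$ via Fourier series, using Lemma~\ref{MultWper} to handle the mapping into $W^{m_1,p}_{per}$. The equivalence of (ii) and (iii) will follow almost immediately from Theorem~\ref{Bu} together with the multiplier result Proposition~\ref{1regPolyMult}: (ii)$\Rightarrow$(iii) is a standard fact that an $L^p$-multiplier is $\mathcal R$-bounded on a UMD space (or can be extracted directly), while (iii)$\Rightarrow$(ii) is where Proposition~\ref{1regPolyMult} enters, since the $\mathcal R$-boundedness of $\{M_\alpha(\mathbf k):\mathbf k\in\Z^n\setminus G\}$ plus ellipticity of $P$ gives $\mathcal R$-boundedness of $\{P(\mathbf k-i\nu)(P(\mathbf k-i\nu)+Q(\mathbf k-i\nu)A)^{-1}\}$, and then one applies the proposition with $N(\mathbf k)=\mathbf k^\alpha$ (noting $\deg N=m_1=\deg P$, and that shifting the argument by $-i\nu$ does not affect ellipticity or the difference estimates, by the Remark following Theorem~\ref{Bu}).

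For (i)$\Leftrightarrow$(ii), I would argue as follows. Assume (i). Given the unique solvability, the solution operator $f\mapsto u$ is bounded from $L^p(\mathcal Q_n,X)$ to $W^{m_1,p}_{\nu,per}(\mathcal Q_n,X)\cap W^{m_2,p}(\mathcal Q_n,D(A))$ by the closed graph theorem. Writing $u=e^{\nu\cdot}v$ and taking Fourier coefficients, the equation $\mathcal A(D)u=f$ becomes, via Lemma~\ref{Wnuper}(ii), the identity $(P(\mathbf k-i\nu)+Q(\mathbf k-i\nu)A)\hat v_\nu(\mathbf k)=\hat f_\nu(\mathbf k)$ for every $\mathbf k$, where $v_\nu$ and $f_\nu$ are the twisted functions; testing with $f=e^{i\mathbf k\cdot x}x_0$, $x_0\in X$, forces $P(\mathbf k-i\nu)+Q(\mathbf k-i\nu)A$ to be surjective, and uniqueness forces injectivity, so the inverse exists on $X$, and by closedness and the closed graph theorem it lies in $\mathcal L(X)$. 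Boundedness of the solution operator into $W^{m_1,p}$ together with Lemma~\ref{MultWper} then shows that $\mathbf k^\alpha(P(\mathbf k-i\nu)+Q(\mathbf k-i\nu)A)^{-1}$ is an $L^p$-multiplier for each $|\alpha|=m_1$, which is (ii). Conversely, assuming (ii), one defines $v$ through its Fourier coefficients $\hat v(\mathbf k):=(P(\mathbf k-i\nu)+Q(\mathbf k-i\nu)A)^{-1}\widehat{(e^{-\nu\cdot}f)}(\mathbf k)$; the multiplier hypothesis in (ii) guarantees (again via Lemma~\ref{MultWper}) that the corresponding function lies in $W^{m_1,p}_{per}$, and one must additionally check membership in $W^{m_2,p}(\mathcal Q_n,D(A))$ and that $A$ applied to it has the right Fourier coefficients — this follows because $Q(\mathbf k-i\nu)A(P+QA)^{-1}=\mathrm{id}-P(\mathbf k-i\nu)(P+QA)^{-1}$ is also a multiplier (using ellipticity of $P$ and $Q$ as in Proposition~\ref{1regPolyMult}), so $D^\alpha Au$ is controlled for $|\alpha|\le m_2$. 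Then $u=e^{\nu\cdot}v$ solves \eqref{RWP1}, and uniqueness is immediate from invertibility of the symbols combined with Fejer's theorem.

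\textbf{Main obstacle.} The routine but delicate part is the bookkeeping in the converse direction (ii)$\Rightarrow$(i): one has to verify that the candidate solution genuinely lies in the intersection space $W^{m_1,p}_{\nu,per}\cap W^{m_2,p}(D(A))$ and not merely in $W^{m_1,p}_{\nu,per}$, which requires showing that the ``lower-order with $A$'' multipliers $\mathbf k^\alpha Q(\mathbf k-i\nu)A(P(\mathbf k-i\nu)+Q(\mathbf k-i\nu)A)^{-1}$ for $|\alpha|\le m_2$ are $L^p$-multipliers; this is exactly where the ellipticity of $Q$ is needed, in the spirit of Proposition~\ref{1regPolyMult}, since $|\mathbf k|^{|\alpha|}|Q(\mathbf k-i\nu)|$ may not be dominated by $|P(\mathbf k-i\nu)|$ pointwise without the elliptic structure — but it is dominated by $|P(\mathbf k-i\nu)|+|Q(\mathbf k-i\nu)|$-type quantities, and combined with the identity relating $QA(P+QA)^{-1}$ to $P(P+QA)^{-1}$ one recovers the needed $\mathcal R$-boundedness. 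The genuinely conceptual step, the equivalence (ii)$\Leftrightarrow$(iii), is short and rests entirely on Theorem~\ref{Bu} and Proposition~\ref{1regPolyMult}.
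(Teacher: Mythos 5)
Your proposal is correct and follows essentially the same route as the paper: twisting by $e^{\nu\cdot}$ via Lemma~\ref{Wnuper}, testing with $f=e^{i\mathbf k\cdot}x_0$ to invert the symbols, Lemma~\ref{MultWper} for the $W^{m_1,p}_{per}$ mapping property, the Arendt--Bu necessity of $\mathcal R$-boundedness for (ii)$\Rightarrow$(iii), and Proposition~\ref{1regPolyMult} together with the identity $Q(\mathbf k_\nu)A(P+QA)^{-1}=\mathrm{id}-P(\mathbf k_\nu)(P+QA)^{-1}$ and ellipticity of $Q$ to build the solution from the $\mathcal R$-boundedness hypothesis. The only differences are organizational (you prove (i)$\Leftrightarrow$(ii) and (ii)$\Leftrightarrow$(iii) instead of the paper's cycle (i)$\Rightarrow$(ii)$\Rightarrow$(iii)$\Rightarrow$(i), and you invoke the closed graph theorem where the paper gets by with the multiplier definition), which is immaterial.
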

\begin{proof}
(i) $\Rightarrow$ (ii): Let $f \in L^p(\mathcal Q_n,X)$ be arbitrary
and let $u$
be a solution of \eqref{RWP1} with right-hand side $e^{\nu \cdot}f$.
Then $e^{-\nu \cdot}\mathcal A(D) u = f$.

To compute the Fourier coefficients, we first remark that
\[ \big( e^{-\nu\cdot} P(D) u\big)\hat ~ (\mathbf k) = P(\mathbf k-i\nu)(e^{-\nu\cdot} u)\hat~
(\mathbf k)\]
by Lemma~\ref{Wnuper}.
Concerning $e^{-\nu\cdot} Q(D) Au$, note that
by definition of a solution we have $Au\in W^{m_2,p}(\mathcal Q_n,X)$. Due to the closedness of $A$, we obtain $D^\alpha Au = AD^\alpha u$ for $|\alpha|\le m_2$, and consequently $Au\in W^{m_2,p}_{\nu,per}(\mathcal Q_n,X)$. Now we can apply Lemma~\ref{Wnuper} to see
\[ \big(e^{-\nu\cdot}Q(D)Au\big)\hat ~ (\mathbf k) = Q(\mathbf k-i\nu)(e^{-\nu\cdot} Au)\hat ~ (\mathbf k) = Q(\mathbf k-i\nu) A (e^{-\nu\cdot}u)\hat ~ (\mathbf k).
\]
Writing $\mathbf k_\nu := \mathbf k - i\nu$ for short, we obtain
\begin{equation}\label{Fequation}
\big( P(\mathbf k_\nu )
+  Q(\mathbf k_\nu ) A \big) \big( e^{-\nu \cdot}u\big) \hat ~ (\mathbf k)  = \hat f (\mathbf k).
\end{equation}

For arbitrary $y \in X$ and $\mathbf k \in \mathbb Z^n$, the choice $f := e^{i\mathbf k \cdot} y$ shows
$\big( P(\mathbf k_\nu )
+  Q(\mathbf k_\nu ) A \big)$ to be surjective.
Let $z \in D(A)$ such that
$\big( P(\mathbf k_\nu )
+  Q(\mathbf k_\nu ) A \big) z = 0$.
For fixed $\mathbf k \in \mathbb Z^n$ set $v := e^{i\mathbf k \cdot} z$
and $u := e^{\nu \cdot}v$. 
Then
\begin{align*}
P(\mathbf k_\nu )  \big( e^{-\nu \cdot}u \big)\hat ~ (\mathbf k)
+  Q(\mathbf k_\nu )A \big( e^{-\nu \cdot}u \big)\hat ~ (\mathbf k) = 0.
\end{align*}
As $(e^{-\nu\cdot} u)\hat~(\mathbf m)=0$ for all $\mathbf m\not=\mathbf k$, this gives
 $\mathcal A(D) u = 0$, hence $v = u = 0$ and $z = 0$.

Altogether we have shown bijectivity of $ P(\mathbf k_\nu )
+  Q(\mathbf k_\nu ) A $ for $\mathbf k \in \mathbb Z^n$. The  closedness of $A$
yields $\big( P(\mathbf k_\nu )
+  Q(\mathbf k_\nu ) A \big)^{-1} \in \mathcal L(X)$.

For $f \in L^p(\mathcal Q_n,X)$ let
$u$
be a solution
of \eqref{RWP1} with right hand side $e^{\nu \cdot}f$ and
$v := e^{-\nu \cdot}u$.
Then $v\in W^{m_1,p}_{per}(\mathcal Q_n,X)$, and \eqref{Fequation} implies
\begin{align*}
\hat v(\mathbf k) =
\big( P(\mathbf k_\nu ) + Q(\mathbf k_\nu ) A \big)^{-1}
\hat f(\mathbf k).
\end{align*}
This shows
\[
	M_0: \mathbb Z^n \rightarrow \mathcal L(L^p(\mathcal Q_n,X));\
	\mathbf k \mapsto\big( P(\mathbf k_\nu ) + Q(\mathbf k_\nu ) A \big)^{-1}
\]
to be a Fourier multiplier such that $T_{M_0}$ maps $L^p(\mathcal Q_n,X)$
into
$W^{m_1,p}_{per}(\mathcal Q_n,X)$.
Due to Lemma \ref{MultWper}, we have that $M_\alpha$ is a Fourier multiplier
for all $|\alpha| = m_1$.

(ii) $ \Rightarrow $ (iii): This follows as in \cite[Prop. 1.11]{Arendt-Bu-2002}.

(iii) $ \Rightarrow $ (i):
For $\mathbf k \neq \mathbf 0$ it holds that
\begin{align*}
 	P(\mathbf k_\nu ) &  \big( P(\mathbf k_\nu ) + Q(\mathbf k_\nu ) A \big)^{-1} \\
 	&	= \frac{P(\mathbf k_\nu )}{\sum\limits_{j = 1}^n \mathbf k^{m_1 \mathbf e_j}}
		\bigg( \sum\limits_{j = 1}^n \mathbf k^{m_1 \mathbf e_j}
		\big( P(\mathbf k_\nu ) + Q(\mathbf k_\nu ) A \big)^{-1} \bigg)
\end{align*}
and as there exists $C > 0$ such that
$|P(\mathbf k_\nu )| \leq C |\sum_{j = 1}^n \mathbf k^{m_1 \mathbf e_j}|$
for $\mathbf k \in \mathbb Z^n \setminus G$ with suitably chosen finite  $G\subset\Z^n$,
Lemma \ref{HintSummeKahane} shows
that the set
\[
 \left\{
 P(\mathbf k_\nu )  \big( P(\mathbf k_\nu ) + Q(\mathbf k_\nu ) A \big)^{-1}:\
	\mathbf k \in \mathbb Z^n \setminus G
	\right\}
\]
is $\mathcal R$-bounded as well.
By Proposition \ref{1regPolyMult} it follows
that $M_\alpha$ for $|\alpha| = m_1$ as well as $P(\cdot - i \nu)M_0$
are Fourier multipliers.
For  arbitrary $f \in L^p(\mathcal Q_n,X)$
we therefore get
$v :=  T_{M_0} (e^{-\nu \cdot}f) \in W^{m_1,p}_{per}(\mathcal Q_n,X)$.
As
\begin{equation}\label{QA_multiplier}
\begin{aligned}
Q(\mathbf k_\nu )A
& \big( P(\mathbf k_\nu ) + Q(\mathbf k_\nu ) A \big)^{-1}\\
& = 
	\textrm{id}_X - P(\mathbf k_\nu )\big( P(\mathbf k_\nu ) + Q(\mathbf k_\nu ) A \big)^{-1},
\end{aligned}
\end{equation}
$Q(\cdot - i \nu)A M_0$ is a Fourier multiplier, too. By ellipticity of $Q$ and Lemma~\ref{HintSummeKahane} again, the same holds for $\mathbf k^\alpha A\big(P(\mathbf k_\nu ) + Q(\mathbf k_\nu )A\big)^{-1}$, $|\alpha|\le m_2$.

Set $u := e^{\nu \cdot}v = e^{\nu \cdot} T_{M_0} e^{-\nu \cdot} f$. Then
$u$ solves \eqref{RWP1} by construction, and Lemma~\ref{MultWper} yields $u \in W^{m_1,p}_{\nu,per}(\mathcal Q_n,X)$ and  $Au \in W^{m_2,p}_{\nu,per}(\mathcal Q_n,X)$.
Finally, uniqueness of $u$  follows immediately from the uniqueness of the representation
as a Fourier series.
\end{proof}
\begin{remark}\label{Q_elliptic}
We have seen in the proof that if one  of the equivalent conditions in Theorem \ref{maintheorem_ell}
is satisfied, we have  $Au \in W^{m_2,p}_{\nu,per}(\mathcal Q_n,X)$.
In particular, we get
\[
(D^\beta A u)|_{x_j = 2\pi} -  e^{2\pi \nu_j} (D^\beta A u)|_{x_j = 0} = 0
\quad (j = 1,\ldots,n;\ |\beta| < m_2)
\]
as additional boundary conditions in \eqref{RWP1}.
\end{remark}

Theorem \ref{maintheorem_ell} enables us to treat
Dirichlet-Neumann type boundary conditions on
$\tilde{\mathcal Q}_n := (0,\pi)^n$ for symmetric operators,
provided $P$ and $Q$ are of appropriate structure.
More precisely, we call a differential operator $\mathcal A(D) = \sum_{|\alpha|\leq m} (p_\alpha D^\alpha + q_\alpha D^\alpha A) $
symmetric if for all $|\alpha| \leq m$  either $p_\alpha = q_\alpha = 0$
or $\alpha \in 2 \mathbb N_0^n$. In particular, $m_1$ is even. As  examples, the operators
$\mathcal A(D_t) := D^2_t + A$ and $\mathcal A(D_1,D_2) := (D_1^2 + D_2^2)^2 + (D_1^4 + D_2^4) A $
are symmetric and satisfy the conditions on $P$ and $Q$ from Theorem~\ref{maintheorem_ell}.

In each direction $j\in\{1,\dots,n\}$, we will consider  one of the following  boundary
conditions:

\begin{itemize}
  \item[(i)] $D_j^\ell u|_{x_j=0} = D_j^\ell u|_{x_j=\pi}=0\quad(\ell=0,2,\dots,m_1-2)$,
  \item[(ii)] $D_j^\ell u|_{x_j=0} = D_j^\ell u|_{x_j=\pi}=0\quad(\ell=1, 3,\dots,m_1-1)$,
  \item[(iii)] $D_j^\ell u|_{x_j=0} = D_j^{\ell+1} u|_{x_j=\pi}=0\quad(\ell=0,2,\dots,m_1-2)$,
  \item[(iv)] $D_j^{\ell+1} u|_{x_j=0} = D_j^\ell u|_{x_j=\pi}=0\quad(\ell=0,2,\dots,m_1-2)$.
 \end{itemize}
Note that for a second-order operator, (i) is of Dirichlet type, (ii) is of Neumann type, and (iii) and (iv) are of mixed type. For instance, in case (iii) we have $u|_{x_j=0} = 0$ and $D_j u|_{x_j=\pi} =0$. Therefore, we refer to these boundary conditions as conditions of Dirichlet-Neumann type. Note that the types may be different in different directions.

\begin{theorem}\label{maintheorem_ell_DN}
Let $\mathcal A(D)$ be symmetric, with $P$ and $Q$ being elliptic, and let the boundary conditions be of Dirichlet-Neumann type as explained above. Define $\nu\in\mathbb C^n$ by setting $\nu_j:= 0$ in cases (i) and (ii) and $\nu_j:= i/2$ in cases (iii) and (iv). If for this $\nu$ one of the equivalent conditions of Theorem~\ref{maintheorem_ell} is fulfilled, then
 for each $f \in L^p(\tilde{ \mathcal Q}_n, X)$ there exists a unique solution
$u \in W^{2m,p}(\tilde{ \mathcal Q}_n,X)$ of $\mathcal A(D) u = f$ satisfying the boundary conditions.
\end{theorem}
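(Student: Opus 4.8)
The plan is to reduce the Dirichlet--Neumann problem on $\tilde{\mathcal Q}_n=(0,\pi)^n$ to the $\nu$-periodic problem on $\mathcal Q_n=(0,2\pi)^n$ treated in Theorem~\ref{maintheorem_ell}, by reflecting solutions across the faces $x_j=0$ and $x_j=\pi$. For a single direction $j$, I would define, for a function $w$ on $(0,\pi)$, an even or odd extension to $(0,2\pi)$ according to whether the boundary condition in direction $j$ is of type (i) (odd reflection, so that $w|_{x_j=0}=w|_{x_j=2\pi}=0$ and the odd-order derivatives match up at $\pi$) or type (ii) (even reflection). For the mixed types (iii) and (iv), reflect about the single face where a Dirichlet-type condition is prescribed, producing a function on $(0,2\pi)$ that is \emph{antiperiodic} in direction $j$, i.e.\ $\nu_j=i/2$; this is exactly the choice of $\nu$ in the statement. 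Doing this in each direction independently yields an extension operator $E\colon L^p(\tilde{\mathcal Q}_n,X)\to L^p(\mathcal Q_n,X)$ and, one checks, $E$ maps into $W^{2m,p}_{\nu,per}(\mathcal Q_n,X)$ when applied to sufficiently regular functions satisfying the boundary conditions.

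Next I would verify that the extension commutes with the operator: because $\mathcal A(D)$ is symmetric, every monomial $D^\alpha$ appearing in $P(D)$ or $Q(D)$ has all components $\alpha_j$ even, hence $D^\alpha$ preserves the parity of $w$ in each direction and preserves $\nu$-(anti)periodicity; therefore $\mathcal A(D)(Ew)=E(\mathcal A(D)w)$ on $\mathcal Q_n$ (away from the reflection hyperplanes, and the regularity of $Ew$ guarantees this holds a.e.). Conversely, if $v$ solves the $\nu$-periodic problem \eqref{RWP1} on $\mathcal Q_n$ with right-hand side $Ef$, then by uniqueness of that solution (which we have from Theorem~\ref{maintheorem_ell}, since one of its equivalent conditions is assumed) and by the invariance of the problem under the reflections (the right-hand side $Ef$ has the prescribed parities, so the reflected function $\tilde v$ also solves it), we get $\tilde v=v$, i.e.\ $v$ itself has the correct parities. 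Restricting $v$ to $\tilde{\mathcal Q}_n$ then gives a function $u:=v|_{\tilde{\mathcal Q}_n}\in W^{2m,p}(\tilde{\mathcal Q}_n,X)$ solving $\mathcal A(D)u=f$, and the parity/antiperiodicity of $v$ forces precisely the Dirichlet--Neumann boundary conditions (i)--(iv) on $u$: e.g.\ oddness about $x_j=0$ kills $D_j^\ell u|_{x_j=0}$ for even $\ell$, evenness kills $D_j^\ell u|_{x_j=0}$ for odd $\ell$, and antiperiodicity about $\pi$ interchanges the roles at the two endpoints to produce the mixed conditions.

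Existence then follows by constructing $u$ from $v:=T_{M_0}(e^{-\nu\cdot}Ef)$ exactly as in the proof of Theorem~\ref{maintheorem_ell}, and uniqueness follows because any two solutions of the Dirichlet--Neumann problem on $\tilde{\mathcal Q}_n$ extend (via $E$) to two solutions of the $\nu$-periodic problem on $\mathcal Q_n$ with the same right-hand side, which must coincide by the uniqueness part of Theorem~\ref{maintheorem_ell}.

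The main obstacle is bookkeeping: one must check carefully that the reflected function $Ew$ actually lies in $W^{2m,p}(\mathcal Q_n,X)$ — i.e.\ that no jump in any derivative of order $<2m$ is created at the reflection hyperplanes $x_j=\pi$ (and $x_j=0,2\pi$ after periodization) — and this is exactly where the boundary conditions (i)--(iv) are used: they are designed so that every derivative up to order $2m-1$ that could be discontinuous across the reflection is in fact forced to vanish or to match. The symmetry hypothesis on $\mathcal A(D)$ (all exponents even, $m_1=2m$ even) is what makes the reflection compatible with both the function space and the operator; without it the reflected function would not solve the reflected equation. Once these parity computations are set up in one variable, the $n$-dimensional case is obtained by iterating over the coordinate directions, since the reflections in distinct directions commute.
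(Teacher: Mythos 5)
Your plan is correct and coincides with the paper's own argument: extend the data by even/odd reflections matched to cases (i)--(iv), solve the resulting $\nu$-periodic problem via Theorem~\ref{maintheorem_ell}, use uniqueness of that problem together with the symmetry of $\mathcal A(D)$ to conclude the solution has the corresponding parities (hence the Dirichlet--Neumann traces vanish after restriction), and prove uniqueness by extending any Dirichlet--Neumann solution to a $\nu$-periodic one. The only cosmetic difference is that the paper reflects onto $(-\pi,\pi)^n$ and shifts, whereas you work directly on $(0,2\pi)^n$; the bookkeeping point you flag (the boundary conditions are exactly what makes the reflected solution lie in $W^{m_1,p}$ and satisfy the $\nu$-periodic conditions) is precisely where the paper uses them as well.
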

\begin{proof}
Following an idea from \cite{Arendt-Bu-2002}, the solution is constructed by a suitable even or odd extension of the right-hand side from $(0,\pi)^n$ to $(-\pi,\pi)^n$. For simplicity of notation, let us consider the case   $n = 2$ and boundary conditions of type (ii) in direction $x_1$ and of type (iii) in direction $x_2$. By definition, this leads to $\nu_1=0$  and $\nu_2 = \frac{i}{2}$.

Let $f \in L^p(\tilde{ \mathcal Q}_n,X)$ be arbitrary. First considering the even
extension of $f$ to the rectangle $(-\pi,\pi) \times (0,\pi)$ and afterwards its odd extension
to $(-\pi,\pi) \times (-\pi,\pi)$,
we end up with a function $F$ which fulfills
$F(x_1,x_2) =  F(- x_1,x_2)$
as well as $F(x_1,x_2) = - F(x_1,-x_2)$ a.e. in
$(-\pi,\pi)^2$.

Now we can apply Theorem~\ref{maintheorem_ell}  with $\nu = (\nu_1,\nu_2)^T$ as above.
(Here and in the following, the result of Theorem~\ref{maintheorem_ell} has to be shifted
from the interval $(0,2\pi)^n$ to the interval $(-\pi,\pi)^n$.) This yields
 a unique solution $U$ of
\begin{equation}\label{RWP_symmetric}
\begin{array}{r@{\quad = \quad}l@{\quad}l}
\mathcal A(D)U  &  F  & \text{in } (-\pi,\pi) \times (-\pi,\pi), \\
 		D_1^\ell U |_{x_1 = -\pi}  & D_1^\ell U |_{x_1 = \pi} &  (  \ell = 0,\dots, m_1-1), \\
 		-D_2^\ell U |_{x_2 = -\pi}  & D_2^\ell U |_{x_2 = \pi} &  (  \ell = 0,\dots, m_1-1).\\
\end{array}
\end{equation}

Symmetry of $\mathcal A(D)$ now shows that $V_1(x_1,x_2) :=  U(- x_1,x_2)$ and
 $V_2(x_1,x_2) :=  - U(x_1,- x_2)$ $(x\in(-\pi,\pi)^2)$ are  solutions
of \eqref{RWP_symmetric} as well.
By uniqueness, $ V_1 = U = V_2$ follows.

Hence $U_{x_2} := U(\cdot,x_2) \in W^{m,p}((-\pi,\pi),X) \subset C^{m-1}((-\pi,\pi),X)$
for a.e. $x_2 \in (-\pi,\pi)$ is even. Together with symmetry of $U_{x_2}$ due to \eqref{RWP_symmetric}, this yields
\[
	U^{(\ell)}_{x_2}(0) = U^{(\ell)}_{x_2}(\pi) = 0\quad (\ell=1, 3, \dots, \textstyle{m_1-1.}).
\]
Accordingly for a.e. $x_1 \in (-\pi,\pi)$ we have that $U_{x_1}$
is odd, and antisymmetry due to \eqref{RWP_symmetric} gives
\[
	U^{(\ell)}_{x_1}(0) = U^{(\ell + 1)}_{x_1}(\pi) = 0\quad (\ell=0, 2, \dots, \textstyle{m_1-2}).
\]
Therefore, $u:= U|_{(0,\pi)^n}$ solves $\mathcal A(D) u = f$ with boundary conditions (ii) for $j=1$ and (iii) for $j=2$.

For arbitrary $n \in \mathbb N$ and arbitrary boundary conditions of Dirichlet-Neumann type, the construction of the solution follows the same lines. Here we choose even extensions in the cases (ii) and (iv) and odd extensions in the cases (i) and (iii).

On the other hand, let $u$ be a solution of $\mathcal A(D) u = f$ satisfying boundary conditions of Dirichlet-Neumann type.
We extend $u$ and $f$ to $U$ and $F$ on $(-\pi,\pi)^n$
as described above. Then $U \in W^{m,p}((-\pi,\pi)^n,X)$,
$Q(D)AU \in L^p((-\pi,\pi)^n,X)$ and due to symmetry of $\mathcal A(D)$ we see that,
apart from a shift, $U$ solves \eqref{RWP1}
with right-hand side $F$ and $\nu$ defined as above. Thus, uniqueness of $U$ yields
uniqueness of $u$ and the proof is complete.
\end{proof}
\begin{remark}
In case $n = 1$ ellipticity of $P$ does no longer force $P$ to be of even order.
Hence the same results can be achieved if $\mathcal A(D)$ is antisymmetric
in the obvious sense, e.g. $\mathcal A(D_t) := D_t^3 + D_t + D_t A$.
\end{remark}

\section{Maximal regularity of cylindrical boundary value problems
with $\nu$-periodic boundary conditions}
Let $F$ be a UMD space and
let $\Omega := \mathcal Q_n \times V \subset \mathbb R^{n + n_V}$
with $V \subset \mathbb R^{n_V}$. For $x \in \Omega$ we write
$x = (x^1,x^2) \in \mathcal Q_n  \times V$, whenever we want to refer
to the cylindrical geometry of $\Omega$. Accordingly, we write
$\alpha = (\alpha^1,\alpha^2) \in \mathbb N_0^n \times \mathbb N _0^{n_V}$
for a multiindex $\alpha\in\mathbb{N}_0^{n + n_V}$ and
$D^{\alpha} = D^{(\alpha^1,\alpha^2)} =: D_1^{\alpha^1}D_2^{\alpha^2}$.

In the sequel we investigate the vector-valued parabolic initial
boundary value problem
\begin{equation}\label{Ini_BVP}
  \begin{aligned}
    u_t + \mathcal A(x,D) u & = f \quad (t\in J,\, x\in \mathcal Q_n\times V),\\
    B_j(x,D) u & = 0 \quad(t\in J,\, x\in \mathcal Q_n\times \partial V,\, j=1,\dots,m_V),\\
(D^\beta u)|_{x_j = 2\pi} -  e^{2\pi \nu_j} (D^\beta u)|_{x_j = 0} & =0
	\quad (j = 1,\ldots,n;\ |\beta| < m_1),\\
u(0,x) & = u_0(x) \quad (x\in \mathcal Q_n\times V).
     \end{aligned}
\end{equation}
Here $J := [0,T)$, $0 < T \leq \infty$, denotes a time interval, and the differential operator
$\mathcal A(x,D)$ has the form
\begin{align*}
\mathcal A(x,D) &
=P(x^1,D_1) + Q(D_1)A_V(x^2,D_2)\\
& :=  \sum_{|\alpha^1|\leq m_1} p_{\alpha^1}(x^1) D_1^{\alpha^1}
						+ \sum_{|\alpha^1|\leq m_2} q_{\alpha^1} D_1^{\alpha^1} A_V(x^2,D_2).
\end{align*}
The operator $A_V(x^2,D_2)$ is assumed to be of order $2m_V$ and is augmented with boundary conditions
\begin{align*}
 B_j(x,D) = B_j(x^2,D_2) \quad (j = 1,\ldots,m_V)
\end{align*}
with operators
$B_j(x^2,D_2)$ of order $m_j < 2m_V$ acting on the boundary of $V$.

This class of equations fits into the framework of Section~3 if we define the operator $A=A_V$ in Section~3 as the $L^p$-realization of the boundary value problem $((A_V(x^2,D_2),B_1(x^2,D_2),\dots, B_{m_V}(x^2,D_2))$. More precisely, for $1<p<\infty$ we define the operator $A_V$ in $L^p(V,F)$ by
\begin{align*}
  D(A_V) & := \{ u\in W^{2m,p}(V,F): B_j(x^2,D_2)u=0\;(j=1,\dots,m_V)\},\\
  A_Vu & := A_V(x,D) u := A_V(x^2,D_2) u \quad (u\in D(A_V)).
\end{align*}
Throughout this section, we will assume that the boundary value problem $(A_V,B_1,$ $\dots,B_{m_V})$ satisfies standard smoothness and parabolicity assumptions as, e.g., given in \cite[Theorem~8.2]{Denk-2003}. In particular, $V$ is assumed to be a domain with compact $C^{2m_V}$-boundary, and $(A_V,B_1,\dots,B_{m_V})$ is assumed to be parameter-elliptic with angle $\varphi_{A_V}\in[0,\pi)$. For the notion of parameter-ellipticity of a boundary value problem, we refer to \cite[Section~8.1]{Denk-2003}.

Recall that a sectorial operator $A$ is called $\mathcal R$-sectorial if there exists a $\theta\in (0,\pi)$ such that
\begin{equation}
  \label{rd08}
  \mathcal R\big(\{\lambda(\lambda+A)^{-1}: \lambda\in\Sigma_{\pi-\theta}\}\big)<\infty.
\end{equation}
For an $\mathcal R$-sectorial operator, $\phi_A^{\mathcal R}:= \inf\{\theta\in (0,\pi): \eqref{rd08}\;\textrm{holds}\}$ is called the $\mathcal R$-angle of $A$ (see \cite[p.~42]{Denk-2003}). The $\mathcal R$-sectoriality of an operator is closely related to maximal regularity. Recall that a closed and densely defined operator in a Banach space $X$ has maximal $L^q$-regularity if for each $f\in L^q((0,\infty),X)$ there exists a unique solution $w\colon(0,\infty)\to D(A)$ of the Cauchy problem
\begin{align*}
 w_t + Aw & = f \quad \textrm{in } (0,\infty),\\
w(0) & = 0
\end{align*}
satisfying the estimate
\[ \|w_t\|_{L^q((0,\infty),X)} + \|Aw\|_{L^q((0,\infty),X)} \le C \|f\|_{L^q((0,\infty),X)}\]
with a constant $C$ independent of $f$. By a well-known result due to Weis \cite[Thm.~4.2]{Weis-2001}, $\mathcal R$-sectoriality in a UMD space with $\mathcal R$-angle less than $\frac \pi 2$ is equivalent to maximal $L^q$-regularity for all $1<q<\infty$. In \cite{Denk-2003} it was shown that standard parameter-elliptic problems lead to $\mathcal R$-sectorial operators:

\begin{proposition}[\mbox{\cite[Theorem~8.2]{Denk-2003}}]
\label{resultdenk2003}
Under the assumptions above, for each
 $\phi > \varphi_{A_V}$ there exists
a $\delta_V = \delta_V(\phi) \geq 0$ such that $A_V + \delta_V$ is $\mathcal R$-sectorial
with $\mathcal R$-angle $\phi_{A_V+\delta_V}^{\mathcal R} \le \phi$. Moreover,
\begin{equation}\label{RZusatzV}
\mathcal{R} ( \{\lambda^{1- \frac{|\alpha^2|}{2m_V}}D^{\alpha^2} (\lambda +
A_V + \delta_V)^{-1};\ \lambda \in \Sigma_{\pi - \phi},\ 0 \leq |\alpha^2|
\leq 2m_V\} ) < \infty.
\end{equation}
\end{proposition}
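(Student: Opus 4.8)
The plan is to follow the localization strategy underlying \cite[Section~8]{Denk-2003}. Since the boundary value problem $(A_V,B_1,\dots,B_{m_V})$ is parameter-elliptic with angle $\varphi_{A_V}$, the first step is to analyze the resolvent equation $(\lambda + A_V(x^2,D_2))u = f$ in $V$ together with $B_j u = 0$ on $\partial V$ and to reduce it, by means of a finite partition of unity subordinate to a cover of $\overline V$ (finite since $\partial V$ is compact), to two types of model problems: an interior patch yields the constant-coefficient full-space problem obtained by freezing the coefficients of $A_V$ at an interior point, while a boundary patch yields, after flattening $\partial V$, the constant-coefficient half-space problem $(A_V(x_0^2,D_2),B_1(x_0^2,D_2),\dots,B_{m_V}(x_0^2,D_2))$ on $\R^{n_V}_+$. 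The commutators and the frozen-coefficient errors created by this localization are of strictly lower order; after introducing the shift $\delta_V$ they become, uniformly for $\lambda\in\Sigma_{\pi-\phi}$, small in norm and in $\mathcal R$-bound, hence absorbable, provided $\delta_V=\delta_V(\phi)$ is chosen large enough.

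The analytic core is the $\mathcal R$-boundedness of the solution operators of the two model problems, all taking place in $L^p(\R^{n_V},F)$, which is a UMD space because $F$ is. For the full-space problem one applies the Fourier transform in $x^2$: the solution operator is the Fourier multiplier with symbol $\bigl(\lambda + a^\#(x_0^2,\xi) + r(x_0^2,\xi)\bigr)^{-1}$, where $a^\#$ is the principal symbol and $r$ collects the lower-order terms. Parameter-ellipticity yields, for $\lambda\in\Sigma_{\pi-\phi}$, uniform Mikhlin-type bounds on the scaled symbols $\lambda^{1-|\alpha^2|/2m_V}(i\xi)^{\alpha^2}\bigl(\lambda + a^\# + r\bigr)^{-1}$ and all their $\xi$-derivatives; the $\mathcal R$-bounded version of the operator-valued Mikhlin multiplier theorem (see \cite{Kunstmann-Weis-2004}) then gives the $\mathcal R$-boundedness of $\{\lambda^{1-|\alpha^2|/2m_V}D^{\alpha^2}(\lambda + A_0)^{-1}\}$. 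For the half-space problem one Fourier-transforms only in the tangential variables and solves the resulting family of ODEs in the normal variable explicitly; the Lopatinskii--Shapiro condition, which is part of parameter-ellipticity, guarantees invertibility of the associated boundary symbol matrix, so that the solution is represented by the full-space solution operator plus a Poisson-type operator built from the stable subspace. Estimating the boundary symbol, its inverse, and their derivatives, and invoking the multiplier theorem once more, produces the required $\mathcal R$-bounds for this model problem as well.

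Finally one patches the local solution operators together by the partition of unity, obtaining a right inverse $R(\lambda)$ of $\lambda + A_V + \delta_V$ modulo an operator of lower order whose $\mathcal R$-bound on $\Sigma_{\pi-\phi}$ is small once $\delta_V$ is large; a Neumann series in this perturbation turns $R(\lambda)$ into the genuine resolvent and transfers all the $\mathcal R$-bounds. This yields that $A_V+\delta_V$ is $\mathcal R$-sectorial with $\mathcal R$-angle $\le\phi$, and, since the weights $\lambda^{1-|\alpha^2|/2m_V}$ are carried along throughout, also the additional estimate \eqref{RZusatzV}. The main obstacle is the half-space model problem: the explicit construction of the Poisson operator and the proof of $\mathcal R$-boundedness of the boundary symbols, where the Lopatinskii--Shapiro condition must be exploited quantitatively and the interplay between the resolvent parameter $\lambda$ and the decay in the normal direction has to be tracked with care. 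All of this is carried out in detail in \cite[Section~8]{Denk-2003}, to which we refer for the complete argument.
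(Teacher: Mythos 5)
The paper offers no proof of this proposition at all---it is imported verbatim from \cite[Theorem~8.2]{Denk-2003}---and your sketch faithfully reproduces the argument of that reference: localization by a finite partition of unity, constant-coefficient whole-space and half-space model problems treated via the operator-valued, $\mathcal R$-bounded Mikhlin theorem and the Lopatinskii--Shapiro condition (Poisson operators), followed by a perturbation/Neumann-series step that also carries the weights $\lambda^{1-|\alpha^2|/2m_V}$ and yields \eqref{RZusatzV}. The one small imprecision is that the errors from freezing the top-order coefficients are not of strictly lower order but top-order terms with small coefficients (made absorbable by choosing the patches small, by continuity of the top-order coefficients, while the shift $\delta_V$ and large $|\lambda|$ handle the genuinely lower-order terms and commutators); this does not affect the validity of the argument.
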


We will show that under suitable assumptions on $P$ and $Q$,  $\mathcal R$-sectoriality of $A_V$ implies $\mathcal R$-sectoriality of the operator related to the cylindrical problem \eqref{Ini_BVP}. For this consider the resolvent problem corresponding to \eqref{Ini_BVP} which is given by
\begin{equation}\label{RWP3}
\begin{array}{r@{\quad=\quad}l@{\quad}l}
\lambda u + \mathcal A(x,D) u  & f & ( x\in \mathcal Q_n \times V), \\
 B_j(x,D) u &  0  & ( x\in  \mathcal Q_n \times  \partial  V,\ j = 1,\ldots, m_V),\\
(D^\beta u)|_{x_j = 2\pi} -  e^{2\pi \nu_j} (D^\beta u)|_{x_j = 0} & 0 &	(j = 1,\ldots,n,\ |\beta| < m_1).
\end{array}
\end{equation}
For sake of readability, we assume that $m_1 = 2m_V$. The $L^p(\Omega,F)$-realization of the boundary value problem \eqref{RWP3} is defined as
\begin{align*}
D(\mathbb A) & := \{u \in W^{m_1,p}(\Omega,F) \cap W^{m_1,p}_{\nu,per}(\mathcal Q_n,L^p(V,F)):\\
& \quad	\quad \quad
	 B_j(x,D) u = 0 \; (j=1,...,m_V),\quad
	 A_V(x,D) u \in W^{m_2,p}(\mathcal Q_n, L^p(V,F))\} ,\\
\mathbb A u & := \mathcal A(x,D) u \quad (u \in D(\mathbb A)).
\end{align*}
\begin{remark} a) Since $m_2 \leq m_1$ it holds that
\[
	D(\mathbb A) =
	W^{m_1,p}(\Omega,F) \cap W^{m_1,p}_{\nu,per}(\mathcal Q_n,L^p(V,F))
	\cap W^{m_2,p}(\mathcal Q_n,D(A_V)).
\]
b) The following techniques apply as well to equations with mixed orders $m_1 \neq 2m_V$.
Then, in the definition of $D(\mathbb A)$, the space $W^{m_1,p}(\Omega,F)$
has to be replaced by
$\{u \in L^p(\Omega,F):\ D^\alpha u \in L^p(\Omega,F) \text{ for }
\frac{|\alpha^1|}{m_1} + \frac{|\alpha^2|}{2m_V} \leq 1\}$.
\end{remark}

\subsection{Constant coefficients}
We first assume
$P(x^1,D_1) = P(D_1)$ and $Q(x^1,D_1)=Q(D_1)$ to have constant coefficients
and set
\[
	 \mathcal A_0 := \mathcal A_0(x^2,D) := P(D_1) +Q(D_1)\big( A_V + \delta_V \big).
\]
With $\mathbb A_0 u := \mathcal A_0 u$ for
$u \in D(\mathbb A_0) := D(\mathbb A)$ we formally get
$ (\lambda +  \mathbb A_0)^{-1}
	=  e^{\nu \cdot} T_{M_\lambda} e^{-\nu \cdot}$
where $T_{M_\lambda}$ denotes the associated operator to
\[
	M_\lambda(\mathbf k) :=
	\big( \lambda + P(\mathbf k - i\nu)+ Q(\mathbf k - i\nu) (A_V + \delta_V)	 \big)^{-1}.
\]
More generally, the Leibniz rule shows
\[
	D^{\alpha} (\lambda + \mathbb A_0)^{-1}
	= D^{\alpha} e^{\nu \cdot} T_{M_\lambda} e^{-\nu \cdot}
	= \sum\limits_{\beta \leq \alpha} g_\beta(\nu) e^{\nu \cdot} T_{M^\beta_\lambda}  e^{-\nu \cdot},
\]
where $g_\beta$ is a polynomial depending on $\beta$.
Here $T_{M^\beta_\lambda}$ denotes the associated operator to
\[
	M^\beta_\lambda(\mathbf k) :=
	\mathbf k^{\beta_1} D^{\beta_2}
	\big(\lambda + P(\mathbf k - i\nu) + Q(\mathbf k - i\nu) (A_V + \delta_V) \big)^{-1}
\]
where $\beta = (\beta_1,\beta_2)^T \leq \alpha$.
\begin{theorem}\label{mainresult_1}
Let $1 < p < \infty$, let $F$ be a UMD space
enjoying property ($\alpha$),
let the boundary valued problem $(A_V,B)$
fulfill the conditions of \cite[Theorem 8.2]{Denk-2003}
with angle of parameter-ellipticity $\varphi_{A_V}$, and
let $\varphi > \varphi_{A_V}$.

For $P$ and $Q$ assume that
\begin{itemize}
	\item [(i)]		$P$ is parameter-elliptic with angle $\varphi_P \in [0,\pi)$,
	\item [(ii)]	$Q$ is elliptic,
	\item	[(iii)]	$Q(\mathbf k - i \nu) \neq 0$ for all $\mathbf k \in \mathbb Z^n$ and
				there exists $\varphi_0 > \varphi_P$ such that
				$\frac{\lambda + P(\mathbf k - i \nu)}{Q(\mathbf k - i \nu)} \in \Sigma_{\pi - \varphi}$
				holds true for all $\mathbf k \in \mathbb Z^n$ and all $\lambda \in \Sigma_{\pi-\varphi_0}$.
\end{itemize}
Then for each $\delta > 0$ the $L^p$-realization $\mathbb A_0+\delta$ of $\mathcal A_0+\delta$ is $\mathcal R$-sectorial with $\mathcal R$-angle
$\phi_{\mathbb A_0 + \delta}^{\mathcal R} \leq \varphi_0$.
Moreover, it holds that
\begin{equation}\label{RZusatzA0}
\mathcal{R} ( \{\lambda^{1- \frac{|\alpha|}{m_1}}D^{\alpha}
(\lambda + \mathbb A_0 + \delta)^{-1}:\ \lambda \in \Sigma_{\pi - \phi},
\ \alpha\in\mathbb N_0^{n + n_v},\ 0 \leq |\alpha| \leq m_1\} ) < \infty.
\end{equation}

In particular, if $\varphi_0 < \frac\pi 2$ then $\mathbb A_0+\delta$
has maximal $L^q$-regularity for every $1<q<\infty$, i.e.,
the initial-boundary value problem \eqref{Ini_BVP} is well-posed in $L^q(T, L^p(\Omega,F))$.
\end{theorem}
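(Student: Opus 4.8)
The plan is to take the discrete Fourier transform in the cylindrical variables $x^1\in\mathcal Q_n$ and to reduce everything to the Bu--Kim multiplier theorem (Theorem~\ref{Bu}~b)). Exactly as in the proof of Theorem~\ref{maintheorem_ell}, once one knows that the relevant symbols are $L^p$-multipliers, one obtains the representation $(\lambda+\mathbb A_0+\delta)^{-1}=e^{\nu\cdot}T_{M_{\lambda+\delta}}e^{-\nu\cdot}$, where $M_\mu(\mathbf k)=\bigl(\mu+P(\mathbf k-i\nu)+Q(\mathbf k-i\nu)(A_V+\delta_V)\bigr)^{-1}\in\mathcal L(L^p(V,F))$, and, by the Leibniz identity recorded before the theorem, $D^\alpha(\lambda+\mathbb A_0+\delta)^{-1}=\sum_{\beta\le\alpha}g_\beta(\nu)\,e^{\nu\cdot}T_{M_{\lambda+\delta}^\beta}e^{-\nu\cdot}$ with $M_\mu^\beta(\mathbf k)=\mathbf k^{\beta^1}D_2^{\beta^2}M_\mu(\mathbf k)$, $\beta=(\beta^1,\beta^2)\le\alpha$. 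Since $u\mapsto e^{\pm\nu\cdot}u$ are bounded isomorphisms of $L^p(\Omega,F)=L^p(\mathcal Q_n,L^p(V,F))$ and $\mathcal R$-boundedness is stable under composition with fixed operators (Lemma~\ref{HintSummeKahane}), it suffices to prove, for every $\phi\in(\varphi_0,\pi)$ and every $\beta$ with $|\beta|\le m_1$, that $\{\lambda^{1-|\beta|/m_1}T_{M_{\lambda+\delta}^\beta}:\lambda\in\Sigma_{\pi-\phi}\}$ is $\mathcal R$-bounded in $\mathcal L(L^p(\Omega,F))$. The case $\beta=\mathbf 0$ then yields $\mathcal R$-sectoriality of $\mathbb A_0+\delta$ with $\mathcal R$-angle $\le\varphi_0$ (invertibility $0\in\rho(\mathbb A_0+\delta)$ coming from the $\lambda=0$ instance of the representation, which is legitimate since $\delta>0$), and collecting the finitely many $\beta\le\alpha$, $|\alpha|\le m_1$, gives \eqref{RZusatzA0}.

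Because $F$, hence $L^p(V,F)$, is a UMD space with property~$(\alpha)$, Theorem~\ref{Bu}~b) reduces this to the $\mathcal R$-boundedness in $\mathcal L(L^p(V,F))$ of the weighted symbol family $\{\lambda^{1-|\beta|/m_1}\,\mathbf k^\gamma\Delta^\gamma M_{\lambda+\delta}^\beta(\mathbf k):\mathbf k\in\Z^n,\ \mathbf 0\le\gamma\le\mathbf 1,\ \lambda\in\Sigma_{\pi-\phi}\}$. I would expand $\Delta^\gamma M_\mu^\beta$ by the Leibniz rule of Lemma~\ref{tec_lemma}~a) (peeling off the scalar $\mathbf k^{\beta^1}$; $D_2^{\beta^2}$ commutes through the differences and, in the expansions below, only reaches the leftmost resolvent factor), and then expand $\Delta^{\beta'}M_\mu=\Delta^{\beta'}(S_\mu^{-1})$, $S_\mu(\mathbf k):=\mu+P(\mathbf k-i\nu)+Q(\mathbf k-i\nu)(A_V+\delta_V)$, by Lemma~\ref{tec_lemma}~b), noting that $\Delta^\omega S_\mu(\mathbf k)=\Delta^\omega P(\mathbf k-i\nu)+\Delta^\omega Q(\mathbf k-i\nu)(A_V+\delta_V)$ for $\omega\neq\mathbf 0$ since $\mu$ is constant. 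The crucial substitution is $\mu_\mathbf k:=\frac{\mu+P(\mathbf k-i\nu)}{Q(\mathbf k-i\nu)}$, which by hypothesis~(iii) lies in $\Sigma_{\pi-\varphi}$ for $\mu=\lambda+\delta\in\Sigma_{\pi-\varphi_0}$; it turns every building block of the expansion into (bounded scalar)$\,\times\,$(one of the $\mathcal R$-bounded operator families supplied by Proposition~\ref{resultdenk2003}), via
\[
 M_\mu(\mathbf k)=\frac{1}{\mu+P(\mathbf k-i\nu)}\,\mu_\mathbf k(\mu_\mathbf k+A_V+\delta_V)^{-1},\qquad (A_V+\delta_V)M_\mu(\mathbf k)=\frac{1}{Q(\mathbf k-i\nu)}\bigl(\operatorname{id}-\mu_\mathbf k(\mu_\mathbf k+A_V+\delta_V)^{-1}\bigr),
\]
\[
 D_2^{\beta^2}M_\mu(\mathbf k)=\frac{\mu_\mathbf k^{\,-1+|\beta^2|/2m_V}}{Q(\mathbf k-i\nu)}\,\bigl[\mu_\mathbf k^{\,1-|\beta^2|/2m_V}D_2^{\beta^2}(\mu_\mathbf k+A_V+\delta_V)^{-1}\bigr].
\]

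The boundedness of the accumulated scalar prefactor in each term (for $\mathbf k$ outside a finite set; the finitely many remaining $\mathbf k$ are handled directly, using that $\delta>0$ places $\lambda+\delta$ in the open sector $\Sigma_{\pi-\varphi_0}$, so (iii) applies and $\mu+P(\mathbf k-i\nu)\neq 0$, together with a continuity/compactness argument in $\lambda$) rests on three elementary facts: parameter-ellipticity of $P$ — applied to $\mathbf k\mapsto P(\mathbf k-i\nu)$, which has the same principal part — gives $1+|\lambda|+|\mathbf k|^{m_1}\le C\,|\mu+P(\mathbf k-i\nu)|$ uniformly in $\lambda\in\overline\Sigma_{\pi-\phi}$ and $\mathbf k$; ellipticity of $P$ and of $Q$ (Remark~\ref{rem_ellptic}~c)) gives $|\mathbf k|^{|\omega|}|\Delta^\omega P(\mathbf k-i\nu)|\le C|P(\mathbf k-i\nu)|$ and $|\mathbf k|^{|\omega|}|\Delta^\omega Q(\mathbf k-i\nu)|\le C|Q(\mathbf k-i\nu)|$ with $|Q(\mathbf k-i\nu)|\ge c>0$; and $\Delta^{\gamma-\beta'}(\mathbf k^{\beta^1})$ is a polynomial of degree $\le|\beta^1|-|\gamma-\beta'|$. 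Writing $t:=1+|\lambda|+|\mathbf k|^{m_1}$, each scalar prefactor is then dominated by a constant times $t^{(1-|\beta|/m_1)+|\beta^1|/m_1+(-1+|\beta^2|/2m_V)}$, whose exponent vanishes identically because $|\beta|=|\beta^1|+|\beta^2|$ and $m_1=2m_V$ — this cancellation of homogeneities is the one point where the compatibility of the orders is essential. Kahane's contraction principle together with the product and union rules of Lemma~\ref{HintSummeKahane} then assemble the $\mathcal R$-boundedness of the weighted symbol family, which is precisely the hypothesis of Theorem~\ref{Bu}~b); its conclusion delivers the $\mathcal R$-boundedness of $\{\lambda^{1-|\beta|/m_1}T_{M_{\lambda+\delta}^\beta}\}$, hence \eqref{RZusatzA0} and the $\mathcal R$-sectoriality with $\mathcal R$-angle $\le\varphi_0$. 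Finally, if $\varphi_0<\tfrac\pi2$ the $\mathcal R$-angle is less than $\tfrac\pi2$ and $L^p(\Omega,F)$ is UMD, so Weis' theorem yields maximal $L^q$-regularity of $\mathbb A_0+\delta$ for every $1<q<\infty$, which is the stated well-posedness of \eqref{Ini_BVP}.

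The main obstacle is this middle step: organizing the iterated Leibniz and inverse-difference expansions and, above all, verifying that after the weights $\lambda^{1-|\beta|/m_1}$, $\mathbf k^{\beta^1}$, the anisotropic factor $\mu_\mathbf k^{-1+|\beta^2|/2m_V}$ and the scalar polynomial differences have all been shuffled onto the resolvent families, the surviving prefactor is genuinely bounded — the cancellation forced by $m_1=2m_V$ and by the two ellipticity hypotheses on $P$ (parameter-ellipticity controlling $|\mu|$ and $|\mathbf k|^{m_1}$ against $|\mu+P(\mathbf k-i\nu)|$) and on $Q$ being exactly what makes it close.
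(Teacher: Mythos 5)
Your proposal follows essentially the same route as the paper's proof: the representation $D^\alpha(\lambda+\mathbb A_0+\delta)^{-1}=\sum_{\beta\le\alpha}g_\beta(\nu)e^{\nu\cdot}T_{M^\beta_{\lambda+\delta}}e^{-\nu\cdot}$, the difference calculus of Lemma~\ref{tec_lemma}, the substitution $\mu_{\mathbf k}=\bigl(\lambda+\delta+P(\mathbf k-i\nu)\bigr)/Q(\mathbf k-i\nu)\in\Sigma_{\pi-\varphi}$ into the $\mathcal R$-bounded family \eqref{RZusatzV} of Proposition~\ref{resultdenk2003}, Kahane's contraction principle combined with the (quasi-)homogeneity count that hinges on $m_1=2m_V$ and the ellipticity of $P$ and $Q$, and finally Theorem~\ref{Bu}~b) and Weis' theorem. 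The only blemish is cosmetic: the reduction should carry the weight $\lambda^{1-|\alpha|/m_1}$ (not $\lambda^{1-|\beta|/m_1}$) for every $\beta\le\alpha$, since the two are not interchangeable near $\lambda=0$; but with the correct weight your prefactor estimate gives the exponent $(|\beta|-|\alpha|)/m_1\le 0$ of $t=1+|\lambda|+|\mathbf k|^{m_1}\ge 1$, so the scalar factor is still bounded and the argument closes exactly as in the paper, which exploits the same slack $|\alpha|\ge|\beta_1|+|\beta_2|$.
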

\begin{proof}
Let $\alpha\in\mathbb{N}_0^{n + n_V}$,
$0 \leq |\alpha| \leq m_1 = 2m_V$, $\mathbf 0 \leq \beta \leq \alpha$,
$\mathbf 0 \leq \gamma \leq \mathbf 1$, and $\phi > \varphi_0$.
For sake of convenience we drop the shift of $A_V$, i.e.  we assume $\delta_V = 0$.
To prove \eqref{RZusatzA0}, for arbitrary $\delta > 0$,
we apply Lemma \ref{tec_lemma}
in order to calculate $\mathbf k^\gamma\Delta^\gamma M^\beta_{\lambda + \delta}(\mathbf k)$.
In what follows we write $\mathbf k_\nu := \mathbf k - i \nu$ for short again.
As in the proof of Proposition \ref{1regPolyMult} it suffices to
show that
\begin{equation}\label{term_1}
\begin{aligned}
	\{ \lambda^{1- \frac{|\alpha|}{m_1}} \mathbf k^\omega \Delta^\omega N(\mathbf k) D^{\beta_2}
	\big(\lambda + \delta + P(\mathbf k_\nu) + Q(\mathbf k_\nu) A_V \big)^{-1}:\
	\lambda \in \Sigma_{\pi - \phi},\ \mathbf k \in \mathbb Z^n\}
\end{aligned}
\end{equation}
with $N(\mathbf k) := \mathbf k ^{\beta_1}$ and arbitrary $\omega \leq \gamma$,
\begin{equation}\label{term_2}
\begin{aligned}
	\{ \mathbf k^\omega \Delta^{\omega}P(\mathbf k_\nu)
	\big(\lambda + \delta + P(\mathbf k_\nu) + Q(\mathbf k_\nu) A_V \big)^{-1}:\
	\lambda \in \Sigma_{\pi - \phi},\ \mathbf k \in \mathbb Z^n\},
\end{aligned}
\end{equation}
with $\mathbf 0 < \omega \leq \gamma$, and
\begin{equation}\label{term_3}
\begin{aligned}
	\{ \mathbf k^\omega \Delta^{\omega} Q (\mathbf k_\nu) A_V
	\big(\lambda + \delta + P(\mathbf k_\nu) + Q(\mathbf k_\nu) A_V \big)^{-1}:\
	\lambda \in \Sigma_{\pi - \phi},\ \mathbf k \in \mathbb Z^n\}
\end{aligned}
\end{equation}
with $\mathbf 0 < \omega \leq \gamma$ are $\mathcal R$-bounded.
Due to our assumptions and Proposition \ref{resultdenk2003}, in particular due to
 \eqref{RZusatzV}, for $0 \leq |\beta_2| \leq m_1 = 2m_V$ the set
\[
	\bigg\{ \bigg( \frac{\lambda + \delta + P(\mathbf k_\nu)}{Q (\mathbf k_\nu)}\bigg)^{1-\frac{|\beta_2|}{m_1}}
	 D^{\beta_2} \bigg( \frac{\lambda + \delta + P(\mathbf k_\nu)}{Q(\mathbf k_\nu)} + A_V \bigg)^{-1}:\
	\lambda \in \Sigma_{\pi - \phi},\ \mathbf k \in \mathbb Z^n \bigg\}
\]
is $\mathcal R$-bounded. For $\beta_2 = 0$ this yields the $\mathcal R$-boundedness of
\begin{equation}\label{P_Res}
 	\{ \big( \lambda + \delta + P(\mathbf k_\nu) \big)
	\big( \lambda + \delta + P(\mathbf k_\nu) + Q (\mathbf k_\nu) A_V \big)^{-1}:\
	\lambda \in \Sigma_{\pi - \phi},\ \mathbf k \in \mathbb Z^n \}
\end{equation}
and with it the $\mathcal R$-boundedness of
\begin{equation}\label{Q_Res}
 	\{ Q (\mathbf k_\nu) A_V
	\big( \lambda + \delta + P(\mathbf k_\nu) + Q (\mathbf k_\nu) A_V \big)^{-1}:\
	\lambda \in \Sigma_{\pi - \phi},\ \mathbf k \in \mathbb Z^n \}.
\end{equation}
In particular, $Q(D)A_V (\lambda + \delta + A_0)^{-1} f \in L^p(\Omega,F))$
for $f \in L^p(\Omega,F))$.

Since $ \lambda +  P(\mathbf k_\nu) \neq 0$ for  $\lambda \in  \Sigma_{\pi - \phi}$ by condition (iii),
for each finite set $G \subset \mathbb Z^n$ there exists $C > 0$ such that
$\big|\mathbf k^\omega \Delta^{\omega}P(\mathbf k_\nu) \big| \leq C \big| \lambda + \delta + P(\mathbf k_\nu) \big|$
uniformly in $\lambda \in  \Sigma_{\pi - \phi}$ and $\mathbf k \in G$.
Together with parameter-ellipticity of $P$ and Remark \ref{rem_ellptic} this allows
to apply the contraction principle of Kahane to prove \eqref{term_2}.

Similarly, ellipticity of $Q$ proves \eqref{term_3} as well as
$D^{\alpha}A_V (\lambda + A_0)^{-1} f \in L^p(\Omega,F))$ for $|\alpha|\le m_2$.

To prove \eqref{term_1} first note that
there exists $C_\phi > 0$ such that
$|\lambda| \leq C_\phi |\lambda + \delta|$ holds for all $\lambda \in \Sigma_{\pi - \phi}$.
As $|\beta_2| \leq |\alpha|$, with the same arguments as above, it suffices to show the existence of a finite set $G \subset \mathbb Z^n$
and $C > 0$ such that
\begin{align*}
	|\lambda + \delta|^{1-\frac{|\alpha|}{m_1}}
	\bigg| \frac{\mathbf k^\omega \Delta^\omega N (\mathbf k)}{Q(\mathbf k_\nu)} \bigg|
	\leq C
	\bigg|
			\frac{\lambda + \delta + P(\mathbf k_\nu)}{Q(\mathbf k_\nu)}
	\bigg|^{1-\frac{|\beta_2|}{m_1}}
\end{align*}
holds independently of $\lambda \in \Sigma_{\pi - \phi}$
and $\mathbf k \in \mathbb Z^n \setminus G$.
Again by ellipticity of $Q$ there exists $C > 0$ such that
$ \frac{|\mathbf k_\nu|^{m_2}}{|Q(\mathbf k_\nu)|}  \leq C$.
Thus,
it is sufficient to show
\begin{align*}
	|\lambda + \delta|^{1-\frac{|\alpha|}{m_1}}
	|\mathbf k^\omega \Delta^\omega N (\mathbf k)| |\mathbf k_\nu|^{-\frac{m_2}{m_1}|\beta_2|}
	\leq C
	 \big| \lambda + \delta + P(\mathbf k_\nu) \big|^{1-\frac{|\beta_2|}{m_1}}.
\end{align*}
The polynomial $\Delta^\omega N$ has degree no larger than
$|\beta_1 - \omega|$ if $\omega \leq \beta_1$ and we have
$\Delta^\omega N \equiv 0$ else.
If $\mathbf k \neq \mathbf 0$,
it is sufficient to consider
$\big|M (\mathbf k) \big| |\mathbf k|^{-\frac{m_2 }{m_1}|\beta_2|}$
instead of $	|\mathbf k^\omega \Delta^\omega N (\mathbf k)||\mathbf k_\nu|^{-\frac{m_2}{m_1}|\beta_2|}$,
where $M(\mathbf k)$ denotes a monomial of degree no larger than $|\beta_1|$.
Hence, it remains to prove
\begin{align*}
	|\lambda + \delta|^{1-\frac{|\alpha|}{m_1}} |M (\mathbf k)| |\mathbf k|^{-\frac{m_2 }{m_1}|\beta_2|}
	\leq C
	 \big| \lambda + \delta + P(\mathbf k_\nu) \big|^{1-\frac{|\beta_2|}{m_1}}.
\end{align*}
Therefore, we end up with a left-hand side that is $(m_1,1)$-quasi-homogeneous in $(\lambda + \delta,\mathbf k)$
of order no larger than $m_1 - |\alpha| + |\beta_1| - \frac{m_2 }{m_1}|\beta_2| \leq m_1 - |\beta_2|$.
Thus, parameter-ellipticity of $P$
yields existence of a finite set $G \subset \mathbb Z^n$ such that
\begin{align*}
	|\lambda + \delta|^{1-\frac{|\alpha|}{m_1}} |M (\mathbf k)| |\mathbf k|^{-\frac{m_2 }{m_1}|\beta_2|}
	\leq C
	 \big| \lambda + \delta + P(\mathbf k_\nu) \big|^{1-\frac{|\beta_2|}{m_1}}
\end{align*}
for $\lambda \in \Sigma_{\pi - \phi}$ and $\mathbf k \in \mathbb Z^n \setminus G$.


The last claim on maximal $L^q$-regularity now follows from \cite[Thm.~4.2]{Weis-2001}.
\end{proof}
\begin{remark}\label{Qconst}
a) Since $Q$ is elliptic, there exists a finite set $G \subset \mathbb Z^n$, such that
$Q(\mathbf k - i \nu) \neq 0$ for $\mathbf k \in \mathbb Z^n \setminus G$.
Instead of the stronger condition (iii), assume that there
exists $\varphi_0 > \varphi_P$ such that
				\begin{itemize}
					\item 	$\lambda + P(\mathbf k - i \nu) \neq 0$ for $\mathbf k \in G$ and $\lambda \in \Sigma_{\pi-\varphi_0}$ and
					\item	$\frac{\lambda + P(\mathbf k - i \nu)}{Q(\mathbf k - i \nu)} \in \Sigma_{\pi - \varphi}$
												for $\mathbf k \in \mathbb Z^n \setminus G$ and $\lambda \in \Sigma_{\pi-\varphi_0}$.
				\end{itemize}
Set $D(\tilde {\mathbb A}_0) :=  W^{m_1,p}_{\nu,per}(\mathcal Q_n,L^p(V,F)) \cap W^{m_2,p}(\mathcal Q_n,D(A_V))$.
Then for each $\delta > 0$ the $L^p$-realization $\tilde{\mathbb A}_0+\delta$ of $\mathcal A_0+\delta$
is $\mathcal R$-sectorial with $\mathcal R$-angle
$\phi_{\tilde{\mathbb A}_0 + \delta}^{\mathcal R} \leq \varphi_0$.\\[2mm]
b) Let $P = P^\#$ be given as homogeneous polynomial, let $Q \equiv 1$, $\nu  = i\mathbf r$ with $\mathbf r \in \mathbb R^n$,
and $\varphi_P + \varphi_{A_V} < \pi$.
Then for each $\varphi_0 > \max\{\varphi_P, \varphi_{A_V} \}$
condition (i) implies that there exists $\varphi > \varphi_{A_V}$ such that condition (iii) holds true.
\end{remark}
\begin{proof}
a) First note that $\big( \lambda + \delta + P(\mathbf k_\nu) + Q (\mathbf k_\nu) A_V \big)^{-1}$
still exists for all $\mathbf k \in \mathbb Z^n$.
In view of the domain of definition of $\tilde {\mathbb A}_0$,
equation \eqref{term_1} only has to be considered with
$\beta_2 = \mathbf 0$ and $\alpha = \beta_1$.
Moreover, the terms of \eqref{term_3} only appear in the formular for
$\mathbf k^\gamma \Delta^\gamma M^\beta_{\lambda + \delta}(\mathbf k)$
if $\mathbf k \in \mathbb Z^n \setminus G$. Now the proof copies. \\[2mm]
b) Since $\varphi_P + \varphi_{A_V} < \pi$, the claim follows readily due to homogeneity and parameter-ellipticity of $P = P^\#$.
Note that $\mathbf k_\nu = \mathbf k + \mathbf r \in \mathbb R^n$ and $\mathbf k_\nu = \mathbf 0$
if and only if $\mathbf r = \mathbf k = \mathbf 0$.
\end{proof}
\begin{remark}
We have seen in the proof that  $A_V u \in W^{m_2,p}_{\nu,per}(\mathcal Q_n,L^p(V,F))$, i.e.
the solution $u$ of
\eqref{RWP3} fulfills the further boundary condition
\[
	(D^\beta A_V u)|_{x_j = 2\pi} -  e^{2\pi \nu_j} (D^\beta A_V u)|_{x_j = 0} = 0
	\quad	(j = 1,\ldots,n;\ |\beta| < m_2)
\]
(cf.  Remark~\ref{Q_elliptic}). Additionally, we have seen in the proof that
\begin{equation}\label{RZusatzA0_A_V}
\mathcal{R} ( \{D^{\alpha}A_V
(\lambda + \mathbb A_0 + \delta)^{-1}:\ \lambda \in \Sigma_{\pi - \phi},\ 0 \leq |\alpha| \leq m_2\} ) < \infty.
\end{equation}
\end{remark}
\begin{remark}\label{Cor_DN}
Consider again boundary value  problems in $(0,\pi)^n \times V$ with Dirichlet-Neumann type boundary conditions
and a symmetric setting with respect to $(0,\pi)^n$. As the extension and restriction operators defined above are bounded,
 Theorem \ref{maintheorem_ell_DN} immediately
yields the related result for Dirichlet-Neumann type boundary conditions. In particular, we obtain
maximal regularity results also for boundary conditions of mixed type (iii) and (iv).
\end{remark}

\subsection{Non-constant coefficients of $P$}
In this subsection,
$P(x^1,D_1)$ is allowed to have non-constant coefficients,
where we assume that
\begin{equation}\label{asscoeff1}
\left\{
\begin{aligned}
p_{{\alpha^1}} \in &\ C_{per}(\mathcal Q_n) \text{ for }
|\alpha^1| = m_1,\\
p_{{\alpha^1}} \in &\
L^{r_\eta}(\mathcal Q_n) \text{ for }
|{\alpha^1}| = \eta < m_1,\ r_\eta \geq p,\ \frac{m_1 - \eta}{n-k} >
\frac{1}{ r_\eta}.
\end{aligned}
\right.
\end{equation}
Here $C_{per}(\mathcal Q_n) := \{ f \in C ([0,2\pi]^n) :\ f|_{x_j = 0 } = f|_{x_j = 2\pi}\  (j = 1,\ldots,n)\}$.
However, in order to apply perturbation results similar to
\cite{Denk-2003} or \cite{Nau-Saal-2011}, we assume $Q \equiv 1$
and set
\[
	\mathcal A(x,D) := P(x^1,D_1) + A_V(x^2,D_2).
\]
\begin{theorem}\label{mainresult_2}
Let $1 < p < \infty$, let $F$ be a UMD space
 enjoying property ($\alpha$),
let $\Omega := \mathcal Q_n \times V$, and
let the boundary valued problem $(A_V,B)$
fulfill the conditions of \cite[Theorem 8.2]{Denk-2003}
with angle of parameter-ellipticity $\varphi_{A_V}$.

For $P$ assume that
\begin{itemize}
	\item the coefficients satisfy \eqref{asscoeff1} and
	\item $P$ is parameter-elliptic
				in $ \mathcal Q_n$ with angle $\varphi_P \in [0,\pi-\varphi_{A_V})$.
\end{itemize}
Then for each $\varphi_0 >  \max\{\varphi_P,\varphi_{A_V}\}$
there exists $\delta = \delta(\varphi_0) \geq 0$ such that
the $L^p$-realization $\mathbb A +\delta$ of $\mathcal A + \delta$ is $\mathcal R$-sectorial with $\mathcal R$-angle
 $\phi_{\mathbb A + \delta}^{\mathcal{R} } \leq \varphi_0$. Moreover,
 we have
\begin{equation}\label{RZusatz}
\mathcal{R} ( \{\lambda^{1-  \frac{|\alpha|}{m_1}}D^{\alpha} (\lambda + \mathbb A +
\delta)^{-1}:\ \lambda \in \Sigma_{\pi - \phi},
\ \alpha\in\mathbb N_0^{n + n_v},\ 0 \leq
|\alpha| \leq m_1\} ) < \infty.
\end{equation}
In particular, if $\varphi_0 < \frac\pi 2$ then $\mathbb A +\delta$ has maximal $L^q$-regularity for every $1<q<\infty$.
\end{theorem}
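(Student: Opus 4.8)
The plan is to deduce Theorem~\ref{mainresult_2} from the constant-coefficient result (Theorem~\ref{mainresult_1}, in the form of Remark~\ref{Qconst}(b)) by a localization argument in the cylindrical variables $x^1\in\mathcal Q_n$ for the top-order part of $P$, followed by a perturbation argument absorbing the lower-order terms. As a first reduction, Lemma~\ref{Wnuper} lets us conjugate the problem by $e^{\nu\cdot}$, so that the $\nu$-periodic boundary conditions become genuinely periodic and $\mathcal Q_n$ may be treated as a torus, i.e. without boundary in the $x^1$-directions; it then suffices to prove $\mathcal R$-sectoriality of $\mathbb A+\delta$ with $\mathcal R$-angle $\le\varphi_0$ together with the estimate \eqref{RZusatz}, for $\delta$ sufficiently large.

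Split $P(x^1,D_1)=P^\#(x^1,D_1)+R(x^1,D_1)$, where $P^\#(x^1,D_1)=\sum_{|\alpha^1|=m_1}p_{\alpha^1}(x^1)D_1^{\alpha^1}$ has coefficients in $C_{per}(\mathcal Q_n)$ and $R(x^1,D_1)=\sum_{\eta<m_1}\sum_{|\alpha^1|=\eta}p_{\alpha^1}(x^1)D_1^{\alpha^1}$ collects the lower-order terms with $L^{r_\eta}$-coefficients. For the top-order operator $\mathcal A^\#:=P^\#(x^1,D_1)+A_V$, with $L^p$-realization $\mathbb A^\#$, I would use a finite partition of unity $(\psi_l)$ on $\mathcal Q_n$ subordinate to a cover by small balls, choose base points $x_l^1$, and freeze: on $\operatorname{supp}\psi_l$ the operator $\mathcal A^\#$ differs from the model $\mathcal A_0^{(l)}:=P^\#(x_l^1,D_1)+A_V$ by a top-order operator of arbitrarily small coefficient sup-norm, by uniform continuity of the $p_{\alpha^1}$. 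Each $\mathcal A_0^{(l)}$ has a homogeneous, parameter-elliptic (angle $\le\varphi_P$) symbol in $x^1$ and $Q\equiv1$, so Remark~\ref{Qconst}(b) places it in the scope of Theorem~\ref{mainresult_1}: the $L^p$-realization of $\mathcal A_0^{(l)}+\delta_V$ is $\mathcal R$-sectorial with $\mathcal R$-angle $\le\varphi_0$ and satisfies \eqref{RZusatzA0}, with bounds uniform in $l$ by compactness of $\overline{\mathcal Q_n}$ and continuity of the coefficients. The standard localization procedure --- gluing $\sum_l\psi_l(\lambda+\mathcal A_0^{(l)}+\delta)^{-1}\phi_l$ with cut-offs $\phi_l\equiv1$ near $\operatorname{supp}\psi_l$, and estimating both the freezing error and the commutators $[P^\#(x^1,D_1),\psi_l]$ (which are of order $m_1-1$) by means of Lemma~\ref{HintSummeKahane} and \eqref{RZusatzA0} --- shows that for $\delta$ large $\mathbb A^\#+\delta$ is $\mathcal R$-sectorial with $\mathcal R$-angle $\le\varphi_0$ and still satisfies the analogue of \eqref{RZusatzA0}; the commutator remainders are themselves lower-order and may be carried into the next step.

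It remains to absorb $R(x^1,D_1)$ together with the commutator remainders. This is exactly where the hypotheses \eqref{asscoeff1} enter: for $|\alpha^1|=\eta<m_1$ and $p_{\alpha^1}\in L^{r_\eta}(\mathcal Q_n)$ with $\frac{m_1-\eta}{n-k}>\frac1{r_\eta}$, the operator $p_{\alpha^1}(x^1)D_1^{\alpha^1}(\lambda+\mathbb A^\#+\delta)^{-1}$ is a relatively $\mathcal R$-bounded perturbation whose $\mathcal R$-bound tends to $0$ as $\delta\to\infty$. To see this I would combine the $\mathcal R$-bound of $\lambda^{1-\eta/m_1}D_1^{\alpha^1}(\lambda+\mathbb A^\#+\delta)^{-1}$ furnished by \eqref{RZusatzA0} with a Hölder estimate in $x^1$ routed through a mixed-type Sobolev embedding whose admissibility is precisely the content of the gap condition in \eqref{asscoeff1}, trading a small surplus $\varepsilon$ of regularity for a factor $\delta^{-\varepsilon/m_1}$ via interpolation. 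Choosing $\delta$ large makes the total $\mathcal R$-bound of $R(x^1,D_1)(\lambda+\mathbb A^\#+\delta)^{-1}$ over $\lambda\in\Sigma_{\pi-\varphi_0}$ strictly less than $1$, so a Neumann series yields $\mathcal R$-sectoriality of $\mathbb A+\delta$ with unchanged angle and, after a short computation, the estimate \eqref{RZusatz}; this is the standard perturbation scheme employed in \cite[Sect.~8]{Denk-2003} and \cite{Nau-Saal-2011}. The final assertion on maximal $L^q$-regularity then follows from \cite[Thm.~4.2]{Weis-2001}. I expect the main obstacle to be precisely this last step: verifying that the rough lower-order coefficients are genuinely small \emph{in the $\mathcal R$-sense} relative to the cylindrical model operator, which forces one to reconcile the anisotropic Sobolev embedding encoded in \eqref{asscoeff1} with the merely parabolic (rather than elliptic) scaling provided by \eqref{RZusatzA0}, while keeping every constant uniform in $\lambda$ along the sector.
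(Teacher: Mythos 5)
Your proposal is correct and follows essentially the same route as the paper: freeze the top-order coefficients and invoke the constant-coefficient result (Theorem~\ref{mainresult_1} via Remark~\ref{Qconst}~b)), handle slightly varying top-order coefficients by perturbation of $\mathcal R$-sectorial operators, localize with a sufficiently fine covering and partition of unity using the periodicity of the top-order coefficients, absorb the lower-order terms under \eqref{asscoeff1} by the perturbation scheme of \cite{Denk-2003} and \cite{Nau-Saal-2011}, and conclude maximal regularity from \cite[Thm.~4.2]{Weis-2001}. The only (inessential) difference is presentational: you first conjugate by $e^{\nu\cdot}$ to pass to the periodic/torus setting, whereas the paper keeps the $\nu$-periodic model operators throughout and handles the boundary of $\mathcal Q_n$ by continuing the covering sets periodically to the opposite side.
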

\begin{proof}
In a first step, we consider $P(x,D)$ to be a homogeneous differential operator
with slightly varying coefficients. That is,
we consider
\[
	\mathcal A^{va}(x,D) := P_0(D_1) + R(x^1,D_1) + A_V(x^2,D_2),
\]
where $P_0(D_1) := \sum\limits_{|\alpha^1| = 2m}  p_{\alpha^1} D_1^{\alpha^1}$
is assumed to have constant coefficients and
$R(x^1,D_1) := \sum\limits_{|\alpha^1| = 2m}  r_{\alpha^1}(x^1) D_1^{\alpha^1}$
fulfills
$\sum\limits_{|\alpha^1| = 2m} \| r_{\alpha^1}\|_\infty \leq \eta$
with $\eta > 0$ sufficiently small. Then the claim follows due to
perturbation results for $\mathcal R$-sectorial operators
(see \cite{Denk-2003}, \cite{Nau-Saal-2011})
from Theorem \ref{mainresult_1} and Remark \ref{Qconst} b).

In a second step, we choose a finite but sufficiently fine open covering of ${\mathcal Q_n}$.
In view of the periodicity of the top order coefficients, we may assume every open set of the covering,
which intersects with $\mathbb R^n \setminus \mathcal Q_n$ to be cut at the boundary of $\mathcal Q_n$ and
continued within $\mathcal Q_n$ on the opposite side.
By means of reflection, this enables us to define local operators with slightly varying coefficients.
With the help of a partition of the unity and perturbation results for lower order terms
subject to condition \eqref{asscoeff1}, just as in \cite{Nau-Saal-2011}, the claim follows.
\end{proof}

\bibliographystyle{amsalpha}

\newcommand{\etalchar}[1]{$^{#1}$}
\providecommand{\bysame}{\leavevmode\hbox to3em{\hrulefill}\thinspace}
\providecommand{\MR}{\relax\ifhmode\unskip\space\fi MR }
\providecommand{\MRhref}[2]{%
  \href{http://www.ams.org/mathscinet-getitem?mr=#1}{#2}
}
\providecommand{\href}[2]{#2}

\end{document}